\documentclass[11pt]{amsart}
\usepackage{amsmath}
\usepackage{amssymb}
\usepackage{amscd}
\usepackage{color}
\usepackage{mathtools}

\def\NZQ{\mathbb}               
\def\NN{{\NZQ N}}
\def\QQ{{\NZQ Q}}
\def\ZZ{{\NZQ Z}}
\def\RR{{\NZQ R}}
\def\CC{{\NZQ C}}

\def\mfp{\mathfrak p}
\def\mfa{\mathfrak a}
\def\Spec{\operatorname{Spec}}
\def \mm{{m_R}}
\def\Ass{\operatorname{Ass}}
\def\MinAss{\operatorname{MinAss}}
\def\height{\operatorname{height}}

%
%
%
%

\newtheorem{Theorem}{Theorem}[section]
\newtheorem{Lemma}[Theorem]{Lemma}
\newtheorem{Corollary}[Theorem]{Corollary}
\newtheorem{Proposition}[Theorem]{Proposition}
\newtheorem{Remark}[Theorem]{Remark}

\newtheorem{Example}[Theorem]{Example}

%
%
\let\epsilon\varepsilon
\let\phi=\varphi
\let\kappa=\varkappa

%
%
\textwidth=15cm \textheight=22cm \topmargin=0.5cm
\oddsidemargin=0.5cm \evensidemargin=0.5cm \pagestyle{plain}
\begin{document}

\title{Epsilon multiplicity and analytic spread of filtrations}

\author{Steven Dale Cutkosky}

\thanks{The first author was partially supported by NSF grant DMS-2054394}

\address{Steven Dale Cutkosky, Department of Mathematics,
University of Missouri, Columbia, MO 65211, USA}
\email{cutkoskys@missouri.edu}

\author{Parangama Sarkar}
\keywords{filtration, epsilon multiplicity, analytic spread, divisorial filtration}
\subjclass[2000]{primary 13H15, 13A18, 13A02} 

\thanks{The second author was partially supported by SERB POWER Grant with Grant No. SPG/2021/002423.}
\begin{abstract} 
We extend the epsilon multiplicity of ideals defined by Ulrich and Validashti  to epsilon multiplicity of filtrations, and show that under mild assumptions this multiplicity exists as a limit. We show that in rather general rings, the epsilon multiplicity of a $\QQ$-divisorial filtration is positive if and only if the analytic spread of the filtration is maximal (equal to the dimension of the ring). The condition that filtrations $\mathcal J\subset \mathcal I$  have the same epsilon multiplicity is considered, and we find conditions ensuring that the filtrations have the same integral closure.
\end{abstract}
\maketitle

\section{Introduction}
The epsilon multiplicity of an ideal $I$ in a local ring $R$ with maximal ideal $m_R$ is defined in \cite{UV} to be 
$$
\epsilon(I)=d!\limsup_n\frac{\lambda_R(H^0_{m_R}(R/I^n))}{n^d}.
$$
It is shown there, by comparison with the $j$-multiplicity,  that $\epsilon(I)$ is always a real number. It is shown in \cite{CHST} that $\epsilon(I)$ can be an irrational number. 
In a local ring $R$, we have that 
$$
H^0_{m_R}(R/I)=I:m_R^{\infty}/I.
$$

For ideals in analytically unramified local rings, the epsilon multiplicity is a limit. 

\begin{Theorem}( \cite[Corollary 6.3]{C1})\label{TheoremI1} Suppose that $I$ is an ideal in an analytically unramified local ring $R$. Then $\epsilon(I)$ is actually a limit, 
$$
\epsilon(I)
=d!\lim_{n\rightarrow \infty}\frac{\lambda_R(I^n:m_R^{\infty}/I^n)}{n^d}.
$$
\end{Theorem}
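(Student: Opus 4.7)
The plan is to interpret $\lambda_R((I^n:m_R^\infty)/I^n)$ as an asymptotic volume attached to a graded filtration of ideals, and to invoke an Okounkov body style limit theorem that becomes available under the analytic unramified-ness hypothesis. Set $J_n := I^n : m_R^\infty$. Since $J_m J_n \subseteq J_{m+n}$, the collection $\{J_n\}$ is a graded filtration of $R$, and the relevant module is
$$
J_n/I^n = H^0_{m_R}(R/I^n),
$$
which has finite length, being supported at $m_R$.

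My first step would be to reduce to the case that $R$ is a complete local domain of dimension $d$. Completion is faithfully flat, commutes with local cohomology, and preserves the hypothesis (since $\widehat R$ is reduced by the assumption on $R$); passing to the minimal primes of $\widehat R$ of maximal dimension isolates the contributions to the leading-order term in $n^d$, while contributions from lower-dimensional components are absorbed into a lower-order error.

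Next, I would exploit the analytic unramified-ness via the Rees--Nagata theorem, which asserts that the integral closure
$$
\overline{R[It]} = \bigoplus_n \overline{I^n}\,t^n
$$
is a finitely generated $R[It]$-module. This provides a finite set of Rees valuations $\nu_1, \ldots, \nu_r$ on the normalized blowup of $I$ that control $\overline{I^n}$. Since $J_n$ and $I^n$ agree at every prime $\mfp \neq m_R$, the saturation $J_n$ is likewise controlled valuatively on the complement of the fiber over $m_R$ in the normalized blowup. Choosing a flag of divisorial valuations, I would construct an Okounkov body $\Delta \subset \RR^d$ for $\{J_n\}$ and a subbody $\Delta' \subset \Delta$ for $\{I^n\}$, and derive
$$
\lim_{n \to \infty} \frac{\lambda_R(J_n/I^n)}{n^d} = \operatorname{vol}(\Delta \setminus \Delta').
$$
Since the limit exists, it agrees with the $\limsup$ defining $\epsilon(I)/d!$, giving the theorem.

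The principal obstacle is the construction of the Okounkov body for the saturated filtration $\{J_n\}$: the operation $I \mapsto I : m_R^\infty$ is notorious for destroying finite generation of the associated Rees algebra (symbolic powers being the prototype), so one cannot directly appeal to the Lazarsfeld--Mustata construction for finitely generated graded algebras. The technical heart of the proof is to verify an ``almost finite generation'' property for $\bigoplus_n J_n t^n$ along the chosen flag, sufficient to apply a Kaveh--Khovanskii style asymptotic count of lattice points in the associated semigroup; this condition, in turn, is precisely what the finiteness of the normalization of $R[It]$ supplied by analytic unramified-ness makes achievable.
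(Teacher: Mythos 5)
Your outline captures the correct general shape of the argument in \cite[Theorem 6.1 and Corollary 6.3]{C1}: pass to the completion, reduce to complete local domains via the minimal primes of $\hat R$ (analytic unramifiedness making $\hat R$ reduced), dominate by a regular local ring and take a rank-$d$ valuation $\nu$ along a flag, and invoke a Kaveh--Khovanskii/Lazarsfeld--Musta\c{t}\u{a}-type lattice-point limit theorem. But the diagnosis of the ``principal obstacle'' is off-target, and as a result the proposal skips the step that actually makes the limit exist. The convex-body limit theorems used in \cite{C1} do not need finite generation (or ``almost finite generation'') of the graded algebra $\bigoplus_n J_n t^n$; what they need is that the relevant Newton--Okounkov region be \emph{bounded}, so that its volume is finite. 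For a graded family of ideals in a local ring the natural Okounkov region is unbounded, so one cannot directly write $\lim_n \lambda_R(J_n/I^n)/n^d = \operatorname{vol}(\Delta\setminus\Delta')$ without first controlling where $\Delta$ and $\Delta'$ diverge.

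The control comes from the linear bound that this paper isolates as property $A(c)$: $(I^n:m_R^\infty)\cap m_R^{cn}=I^n\cap m_R^{cn}$ for all $n$, for some fixed $c$ (a nontrivial fact for the $I$-adic filtration, in the spirit of Swanson's linear growth theorem). Combined with the Artin--Rees-type bound $K_{\alpha n}\cap R\subset m_R^n$ for the valuation ideals $K_\lambda$ of $\nu$, one gets $J_n\cap K_{\alpha c n}=I^n\cap K_{\alpha c n}$, and then truncates both semigroups by the inequality $n_1+\cdots+n_d\le \alpha c\,j$, making each body bounded; the theorem is obtained as the difference of the two resulting limits $\lim_n \lambda_R(J_n/(J_n\cap K_{\alpha c n}))/n^d$ and $\lim_n \lambda_R(I^n/(I^n\cap K_{\alpha c n}))/n^d$. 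Your proposal never establishes the bound $A(c)$, never performs the truncation, and never explains why $\Delta\setminus\Delta'$ has finite volume, so the claimed volume formula would not follow from the sources you invoke. The Rees--Nagata normalization of $R[It]$ is relevant background (it characterizes analytic unramifiedness), but in \cite{C1} it is not what supplies the needed estimate; the work is done by the $A(c)$-type bound and the truncation, which are missing from your argument.
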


In this paper, we extend epsilon multiplicity to filtrations and obtain some results generalizing theorems about epsilon multiplicities for ideals. 

We extend the definition of epsilon multiplicity to  filtrations by defining the epsilon multiplicity of a filtration $\mathcal I=\{I_n\}$ of ideals in $R$ to be
\begin{equation}\label{epdef}
\epsilon(\mathcal I)=d!\limsup_n\frac{\lambda_R(H^0_{m_R}(R/I_n))}{n^d}.
\end{equation}

Let $\mathcal I=\{I_n\}$ be a filtration on a local ring $R$ and $c\in \ZZ_{>0}$. We will say that $\mathcal I$ satisfies property $A(c)$ if 
$$
(I_n:m_R^{\infty})\cap m_R^{cn}=I_n\cap m_R^{cn}  \mbox{ for all $n\in \NN$.}
$$

As a consequence of  \cite[Theorem 6.1]{C1}, we have the following theorem.

\begin{Theorem}\label{ThmE1} Let $R$ be an analytically unramified  local ring of dimension $d$, and $\mathcal I=\{I_n\}$ be a filtration on $R$ which satisfies the property $A(c)$ for some $c\in \ZZ_{>0}$. Then 
\begin{equation}\label{eqI6}
\epsilon(\mathcal I)=d!\lim_{n\rightarrow \infty}\frac{\lambda_R(I_n:m_R^{\infty}/I_n)}{n^d}
\end{equation}
is a real number. 
That is, the epsilon multiplicity of $\mathcal I$ exists as a limit.
\end{Theorem}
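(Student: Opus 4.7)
The plan is to use property $A(c)$ to rewrite $\lambda_R(H^0_{m_R}(R/I_n))$ as a difference of two lengths in the artinian quotients $R/m_R^{cn}$, and then invoke \cite[Theorem 6.1]{C1} on each piece.

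Set $J_n = I_n : m_R^\infty$ and let $\mathcal J = \{J_n\}$. A routine verification shows $\mathcal J$ is itself a filtration: if $m_R^a x \subseteq I_n$ and $m_R^b y \subseteq I_m$, then $m_R^{a+b} xy \subseteq I_n I_m \subseteq I_{n+m}$, so $J_n J_m \subseteq J_{n+m}$. Since $H^0_{m_R}(R/I_n) \cong J_n/I_n$, the goal reduces to showing that $\lim_{n \to \infty} \lambda_R(J_n/I_n)/n^d$ exists as a real number. I would then consider the surjection $J_n \twoheadrightarrow (J_n + m_R^{cn})/(I_n + m_R^{cn})$; by the modular law (using $I_n \subseteq J_n$), its kernel equals
$$
J_n \cap (I_n + m_R^{cn}) = I_n + (J_n \cap m_R^{cn}),
$$
which by the hypothesis $A(c)$ equals $I_n + (I_n \cap m_R^{cn}) = I_n$. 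Hence
\begin{equation*}
\lambda_R(J_n/I_n) = \lambda_R\bigl(R/(I_n + m_R^{cn})\bigr) - \lambda_R\bigl(R/(J_n + m_R^{cn})\bigr),
\end{equation*}
and both terms on the right are finite because $m_R^{cn}$ is contained in both ideals.

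Finally, applying \cite[Theorem 6.1]{C1} separately to $\mathcal I$ and to $\mathcal J$ should furnish real-valued limits $\lim_{n \to \infty} \lambda_R(R/(I_n + m_R^{cn}))/n^d$ and $\lim_{n \to \infty} \lambda_R(R/(J_n + m_R^{cn}))/n^d$ in the analytically unramified setting. Subtracting these yields existence and finiteness of $\lim_{n \to \infty} \lambda_R(J_n/I_n)/n^d$, which coincides with $\epsilon(\mathcal I)$ since the $\limsup$ of a convergent sequence equals its limit. The main obstacle I anticipate is verifying that the saturated filtration $\mathcal J$ fits the hypotheses of \cite[Theorem 6.1]{C1} just as cleanly as $\mathcal I$ does, so that both limits indeed exist and may be subtracted termwise; beyond this, the argument is purely formal.
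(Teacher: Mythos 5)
Your reduction is correct and is indeed the heart of the matter: with $J_n = I_n : m_R^\infty$, the family $\mathcal J=\{J_n\}$ is a filtration (your multiplicativity check is right), $H^0_{m_R}(R/I_n)\cong J_n/I_n$, and property $A(c)$ is by definition the statement that $J_n\cap m_R^{cn}=I_n\cap m_R^{cn}$ for all $n$. But this is \emph{exactly} the hypothesis of Theorem~6.1 of \cite{C1}, which (for $R$ analytically unramified of dimension $d$ and filtrations $\mathcal I\subset\mathcal J$ with $I_n\cap m_R^{cn}=J_n\cap m_R^{cn}$ for all $n$) asserts directly that $\lim_{n\to\infty}\lambda_R(J_n/I_n)/n^d$ exists as a real number. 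So the proof is finished at that point; nothing more is needed.

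The rest of your argument is an unnecessary detour, and its final step has a gap. Your modular-law computation and the identity
\[
\lambda_R(J_n/I_n)=\lambda_R\bigl(R/(I_n+m_R^{cn})\bigr)-\lambda_R\bigl(R/(J_n+m_R^{cn})\bigr)
\]
are correct, but Theorem~6.1 of \cite{C1} is a statement about a \emph{pair} of graded families, not about the existence of $\lim_n\lambda_R(R/(I_n+m_R^{cn}))/n^d$ for a single filtration; there is no hypothesis pair to feed it here. If you try to manufacture one by taking $\mathcal I'=\{I_n+m_R^{cn}\}$ and $\mathcal J'=\{R\}$, the problem is that $\{I_n+m_R^{cn}\}$ is in general \emph{not} a filtration: $(I_n+m_R^{cn})(I_m+m_R^{cm})$ contains $I_n m_R^{cm}$, which need not lie in $I_{n+m}+m_R^{c(n+m)}$ (the exponent $cm$ is too small by roughly $cn$). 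So the hypotheses of the theorem are not met for these families. One can patch this by instead writing $\lambda_R(R/(I_n+m_R^{cn}))=\lambda_R(R/m_R^{cn})-\lambda_R(I_n/(I_n\cap m_R^{cn}))$ and applying Theorem~6.1 to the genuine pair of filtrations $\{I_n\cap m_R^{cn}\}\subset\{I_n\}$, but this again amounts to reproving a special case of the very theorem you could have applied directly after your first paragraph.
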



We deduce that the epsilon multiplicity exists for many naturally occurring filtrations.  Discrete valued filtrations are defined in Section \ref{notation}.
 \begin{Theorem}\label{ep}
 	Let $R$ be a  Noetherian local domain, $\mathcal J$  a discrete valued filtration and $\mfp$ any prime ideal in $R$. Then the filtration $\mathcal J_\mfp$ satisfies $A(c_\mfp)$ for some $c_\mfp\in\ZZ_{>0}$. 
 	
 	Moreover, if $R_\mfp$ is an  analytically unramified local domain then $\epsilon(\mathcal J_\mfp)$ exists as a limit. In particular, epsilon multiplicity exists as a limit for  discrete valued filtrations of an analytically unramified local domain.
 	\end{Theorem}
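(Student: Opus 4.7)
The proof divides into two parts mirroring the statement: first verify property $A(c_{\mathfrak p})$ for the localized filtration $\mathcal J_{\mathfrak p}$, then deduce the limit statement from Theorem \ref{ThmE1}; the ``in particular'' clause follows by specializing to $\mathfrak p = m_R$, so that $R_{\mathfrak p} = R$ and $\mathcal J_{\mathfrak p} = \mathcal J$.

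I would begin by unpacking the definition of a discrete valued filtration: there exist discrete rank-one valuations $\nu_1, \ldots, \nu_r$ of the fraction field of $R$, non-negative on $R$, together with $a_1, \ldots, a_r \in \ZZ_{\ge 0}$, such that $J_n = \bigcap_i \mathfrak a_i(n)$ with $\mathfrak a_i(n) = \{f \in R : \nu_i(f) \ge na_i\}$, a $\mathfrak q_i$-primary ideal, where $\mathfrak q_i$ denotes the center of $\nu_i$ on $R$. Upon localization at $\mathfrak p$, valuations with $\mathfrak q_i \not\subseteq \mathfrak p$ become redundant: for each such $i$ one picks $s_i \in \mathfrak q_i \setminus \mathfrak p$, so that $\prod_i s_i^N$ is a unit in $R_{\mathfrak p}$ with $\nu_i$-value as large as we wish, and the constraint $\nu_i(\cdot) \ge n a_i$ can always be met by multiplying a representative in $R$ by such a unit. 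Combined with the fact that localization commutes with finite intersections, this yields
\[
J_n R_{\mathfrak p} = \bigcap_{\mathfrak q_i \subseteq \mathfrak p} \mathfrak a_i(n) R_{\mathfrak p},
\]
and each factor on the right is the valuation ideal $\{g \in R_{\mathfrak p} : \nu_i(g) \ge na_i\}$ (since $\nu_i(s) = 0$ for $s \in R \setminus \mathfrak p$ when $\mathfrak q_i \subseteq \mathfrak p$), which is $\mathfrak q_i R_{\mathfrak p}$-primary.

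I would then partition the remaining indices into $S' = \{i : \mathfrak q_i = \mathfrak p\}$ and $S'' = \{i : \mathfrak q_i \subsetneq \mathfrak p\}$, setting $L_n = \bigcap_{i \in S'} \mathfrak a_i(n) R_{\mathfrak p}$ and $K_n = \bigcap_{i \in S''} \mathfrak a_i(n) R_{\mathfrak p}$. Then $J_n R_{\mathfrak p} = K_n \cap L_n$, where $L_n$ is $m_{R_{\mathfrak p}}$-primary (intersection of $\mathfrak p R_{\mathfrak p}$-primary ideals), and $K_n$ has no $m_{R_{\mathfrak p}}$-primary associated prime, so $K_n : m_{R_{\mathfrak p}}^\infty = K_n$ and consequently $J_n R_{\mathfrak p} : m_{R_{\mathfrak p}}^\infty = K_n$. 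The identity defining $A(c_{\mathfrak p})$ therefore reduces to $K_n \cap m_{R_{\mathfrak p}}^{c_{\mathfrak p} n} \subseteq L_n$, and for this it suffices to show $m_{R_{\mathfrak p}}^{c_{\mathfrak p} n} \subseteq L_n$ for a suitable $c_{\mathfrak p}$. Choosing $c_{\mathfrak p} = \max\{a_i : i \in S'\}$ works: each such $\nu_i$ dominates $R_{\mathfrak p}$ and is $\ZZ$-valued, so that $\nu_i(x) \ge \mathrm{ord}_{m_{R_{\mathfrak p}}}(x) \ge c_{\mathfrak p} n \ge n a_i$ for every $x \in m_{R_{\mathfrak p}}^{c_{\mathfrak p} n}$, placing $x$ in every $\mathfrak a_i(n) R_{\mathfrak p}$ with $i \in S'$ and hence in $L_n$.

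Once $A(c_{\mathfrak p})$ is verified, Theorem \ref{ThmE1} applied to the analytically unramified local ring $R_{\mathfrak p}$ with filtration $\mathcal J_{\mathfrak p}$ immediately yields the limit formula for $\epsilon(\mathcal J_{\mathfrak p})$. The main difficulty I expect is the careful justification of $J_n R_{\mathfrak p} = \bigcap_{\mathfrak q_i \subseteq \mathfrak p} \mathfrak a_i(n) R_{\mathfrak p}$: one must absorb all valuations outside $\mathfrak p$ simultaneously via a single unit in $R_{\mathfrak p}$ whose exponents grow linearly in $n$, and then verify that the extended valuation ideals provide the claimed primary decomposition in $R_{\mathfrak p}$. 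The remaining ingredients---the comparison $\nu_i \ge \mathrm{ord}_{m_{R_{\mathfrak p}}}$ for dominating valuations, the behavior of saturation under primary decomposition, and the reduction to Theorem \ref{ThmE1}---are standard.
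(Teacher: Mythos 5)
Your proof is correct and follows essentially the same route as the paper: localize, discard the valuations whose centers fail to be contained in $\mathfrak p$, split the rest into those centered exactly at $\mathfrak p$ (the $\mathfrak p R_{\mathfrak p}$-primary part $L_n$) and those centered strictly inside (the saturated part $K_n = J_nR_{\mathfrak p}:m_{R_\mathfrak p}^{\infty}$), and choose $c_{\mathfrak p}$ so that $m_{R_\mathfrak p}^{c_\mathfrak p n}\subseteq L_n$. One small mismatch: you take $a_i\in\ZZ_{\ge 0}$ and $\mathfrak a_i(n)=\{f:\nu_i(f)\ge na_i\}$, whereas the paper's discrete valued filtrations allow $a_i\in\RR_{>0}$ with $I(\nu_i)_{\lceil na_i\rceil}$, so one should instead set $c_{\mathfrak p}=\max\{\lceil a_i\rceil : i\in S'\}$ (and handle $S'=\emptyset$ by $c_{\mathfrak p}=1$), using $\lceil a_i\rceil n\ge\lceil a_i n\rceil$ in the final inclusion.
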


We give examples of filtrations $\mathcal I$ for which the epsilon multiplicity is finite, but the epsilon multiplicity does not exist as a limit and filtrations $\mathcal I$ for which the epsilon multiplicity is infinite. 
 These filtrations necessarily do not satisfy $A(c)$ for any $c$.

The following theorem about epsilon multiplicity of ideals follows from results of Ulrich and Validashti.

\begin{Theorem}\label{TheoremI2} Suppose that $R$ is a universally catenary Noetherian local ring of dimension $d$  and $I$ is an ideal of $R$. Then the analytic spread
$\ell(I)=d$ if and only if $\epsilon(I)>0$.
\end{Theorem}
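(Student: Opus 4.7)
The plan is to split the biconditional into its two directions and apply the corresponding bounds from Ulrich--Validashti \cite{UV} to each, after a preliminary reduction to the equidimensional case. First I would reduce to the situation where $R$ is a universally catenary Noetherian local domain of dimension $d$. The universally catenary hypothesis, combined with the standard behavior of analytic spread under passage to minimal primes, ensures that $\ell(I) = d$ if and only if $\ell_{R/P}(I(R/P)) = d$ for at least one minimal prime $P$ of $R$ with $\dim R/P = d$. On the $\epsilon$-multiplicity side, the short exact sequences $0 \to (P + I^n)/I^n \to R/I^n \to R/(P + I^n) \to 0$ together with left exactness of $H^0_{m_R}$ yield $\epsilon_R(I) \geq \epsilon_{R/P}(I(R/P))$, and a prime filtration of $R$ gives an upper bound $\epsilon_R(I) \leq \sum_P \lambda(R_P)\, \epsilon_{R/P}(I(R/P))$, summed over the minimal primes of maximal dimension. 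Hence $\epsilon_R(I) > 0$ if and only if $\epsilon_{R/P}(I(R/P)) > 0$ for some such $P$, which completes the reduction.

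For the forward direction, I assume $R$ is a universally catenary Noetherian local domain of dimension $d$ with $\ell(I) = d$. The key construction from \cite{UV} produces $d$ elements in $I$ that are analytically independent modulo $m_R$, and exploits the resulting superficial-reduction structure to exhibit a lower bound of the form $\lambda(H^0_{m_R}(R/I^n)) \geq c\, n^d$ for all large $n$, with $c > 0$. This immediately gives $\epsilon(I) \geq c \cdot d! > 0$.

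For the reverse direction, if $\ell(I) < d$, then \cite{UV} shows that $\lambda(H^0_{m_R}(R/I^n))$ is dominated by a polynomial of degree at most $\ell(I) < d$ in $n$, since its top-order asymptotics are governed by the dimension of the fiber cone $R[It]/m_R R[It]$, which equals $\ell(I)$. Consequently $\epsilon(I) = 0$. The main obstacle is the reduction step: one must check carefully that both $\ell(\cdot)$ and $\epsilon(\cdot)$ behave well under quotients by minimal primes of maximal dimension, and it is precisely the universally catenary hypothesis that guarantees this compatibility. Once the reduction is carried out, the two halves of the biconditional are direct applications of the bounds from \cite{UV}.
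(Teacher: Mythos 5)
Your overall strategy mirrors the paper's: split the biconditional into two directions and cite the Ulrich--Validashti machinery for each. The reverse direction (via the $j$-multiplicity/fiber-cone bound, giving $\ell(I)<d\Rightarrow\epsilon(I)=0$ in any local ring) and the forward direction (via Theorem 4.4 of \cite{UV}, giving $\ell(I)=d\Rightarrow\epsilon(I)>0$) are exactly the two ingredients the paper invokes. You correctly notice that \cite[Theorem 4.4]{UV} is stated for \emph{equidimensional} universally catenary rings, whereas the theorem here omits equidimensionality, and you attempt to bridge this by reducing to the domain case via minimal primes of maximal dimension; the paper does not carry out any such reduction and simply cites \cite[Theorem 4.4]{UV}.

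The reduction step, however, contains a genuine gap. You claim that the exact sequence
\[
0 \longrightarrow (P+I^n)/I^n \longrightarrow R/I^n \longrightarrow R/(P+I^n) \longrightarrow 0
\]
together with left exactness of $H^0_{m_R}$ yields $\epsilon_R(I)\ge \epsilon_{R/P}(I(R/P))$. It does not: left exactness gives an injection $H^0_{m_R}\bigl((P+I^n)/I^n\bigr)\hookrightarrow H^0_{m_R}(R/I^n)$ and a map $H^0_{m_R}(R/I^n)\to H^0_{m_R}\bigl(R/(P+I^n)\bigr)$ that need not be surjective (the cokernel sits in $H^1_{m_R}\bigl((P+I^n)/I^n\bigr)$), so one obtains no length comparison between $H^0_{m_R}(R/I^n)$ and $H^0_{m_R}\bigl(R/(P+I^n)\bigr)$. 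The upper bound via a prime filtration suffers from the same problem: the successive quotients of the induced filtration of $R/I^n$ are quotients of the modules $R/(Q_i+I^n)$, and a surjection of $R$-modules does not give an inequality of lengths of $H^0_{m_R}$ in the direction you need. Since this inequality is precisely what your reduction relies on, the reduction is not established. If one simply assumes equidimensionality (as the paper's citation of \cite[Theorem 4.4]{UV} implicitly does), the rest of your argument is correct and matches the paper's; but the additional generality you sought is not obtained by the argument you gave.
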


The proof that $\ell(I)<\dim R$ implies $\epsilon(I)=0$ follows from the theory of $j$-multiplicity (\cite{AM} and \cite{FOV}) as explained in  the inequality on the second line of page 97 of \cite{UV} and Remark 4.2 in \cite{UV1}. This part of the proof is valid for an arbitrary local ring.  By Theorem 4.4 \cite{UV}, if  $R$ is an equidimensional, universally catenary Noetherian local ring  and $\ell(I)=d$, then   $\epsilon(I)>0$.






 We obtain the following generalization of Theorem \ref{TheoremI2} to filtrations.  Divisorial filtrations and the analytic spread $\ell(\mathcal I)$ of a filtration are defined in Section \ref{notation}.

\begin{Theorem}\label{TheoremI4} Let $R$ be a $d$-dimensional excellent normal local domain of equicharacteristic zero, or an arbitrary excellent local domain of dimension $\le 3$.  Let $\mathcal I$ be a 
$\QQ$-divisorial filtration of $R$. Then 
 $\epsilon(\mathcal I)>0$ if and only if 
the analytic spread $\ell(\mathcal I)=d$.
\end{Theorem}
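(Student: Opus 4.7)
The plan is to split the proof into the two implications, mirroring the structure of the Ulrich--Validashti argument for Theorem \ref{TheoremI2} and using the $\QQ$-divisorial hypothesis to reduce positivity of $\epsilon(\mathcal I)$ to the case of an honest ideal where Theorem \ref{TheoremI2} applies.

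For the direction $\epsilon(\mathcal I) > 0 \Rightarrow \ell(\mathcal I) = d$ I would argue the contrapositive, and expect this implication not to require the $\QQ$-divisorial hypothesis. From the short exact sequence $0 \to I_{n-1}/I_n \to R/I_n \to R/I_{n-1} \to 0$, taking $m_R$-local cohomology and telescoping yields
\[
\lambda_R(H^0_{m_R}(R/I_n)) \;\le\; \sum_{k=0}^{n-1}\lambda_R(H^0_{m_R}(I_k/I_{k+1})),
\]
and the right-hand side is a partial sum for a $j$-multiplicity of the filtration $\mathcal I$. Its growth rate is governed by the Krull dimension of the fiber cone $R[\mathcal I t]/m_R R[\mathcal I t]$, which equals $\ell(\mathcal I)$; so if $\ell(\mathcal I) < d$ the right-hand side is $o(n^d)$, forcing $\epsilon(\mathcal I) = 0$.

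For the direction $\ell(\mathcal I) = d \Rightarrow \epsilon(\mathcal I) > 0$ I would exploit the geometric origin of $\mathcal I$: there exist divisorial valuations $\nu_1,\ldots,\nu_r$ of the fraction field of $R$ centered on $m_R$ and positive rationals $a_1,\ldots,a_r$ with $I_n = \{f\in R : \nu_i(f)\ge n a_i\text{ for all }i\}$. Under the hypotheses on $R$ (resolution of singularities in equicharacteristic zero, or Lipman-style desingularization available for excellent schemes in dimension at most three, after normalizing in the non-normal case), there is a projective birational model $\varphi: Y\to \Spec R$ with $Y$ regular on which all $\nu_i$ are realized as divisorial valuations of exceptional prime divisors $E_i$; writing $D=\sum a_i E_i$ gives $I_n=\Gamma(Y,\mathcal O_Y(-\lceil nD\rceil))$. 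For $N$ sufficiently divisible, $\mathcal O_Y(-ND)$ becomes $\varphi$-generated, so the ideal $J := \varphi_*\mathcal O_Y(-ND) \subseteq I_N$ has $\ell(J)=\ell(\mathcal I)=d$, and Theorem \ref{TheoremI2} gives $\epsilon(J) > 0$. A comparison argument combining $J^n\subseteq I_{nN}$ with a bound $\lambda_R(H^0_{m_R}(I_{nN}/J^n)) = o(n^d)$, obtained by applying the telescoping inequality above to the filtration $\mathcal I$ modulo $J$, then transfers the positivity through the limit formula of Theorem \ref{ep}, yielding $\epsilon(\mathcal I)\ge N^{-d}\epsilon(J) > 0$.

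The main obstacle is the second implication, specifically verifying that $\ell(J) = d$ and that the defect $\lambda_R(H^0_{m_R}(I_{nN}/J^n))$ really has subleading growth rather than contributing at order $n^d$. These steps rely precisely on the existence of a sufficiently nice birational model $Y$ furnished by the hypotheses on $R$, and on the observation that normalization (used to reduce the non-normal low-dimensional case to the normal case) is a finite birational extension under which both the analytic spread and the asymptotic growth of $H^0_{m_R}(R/I_n)$ are well-behaved.
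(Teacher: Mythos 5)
Your proposal has genuine gaps in both directions, and they stem from the same source: the arguments you invoke implicitly require the filtration to be Noetherian (or at least to have Noetherian associated graded / integral closure), whereas the whole point of the theorem is to treat $\QQ$-divisorial filtrations whose Rees algebras may fail to be finitely generated.

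For the direction $\epsilon(\mathcal I)>0 \Rightarrow \ell(\mathcal I)=d$, you write that you ``expect this implication not to require the $\QQ$-divisorial hypothesis.'' This is false, and the paper itself gives counterexamples: Example \ref{Example1} produces a filtration $\mathcal I_\tau$ in $k[x,y]_{(x,y)}$ with $\epsilon(\mathcal I_\tau)=\tfrac12>0$ but $\ell(\mathcal I_\tau)=0$, and Example \ref{Example7} produces an $\RR$-divisorial filtration in $k[x]_{(x)}$ with $\epsilon(\mathcal I)=\pi>0$ but $\ell(\mathcal I)=0$. Your telescoping inequality $\lambda_R(H^0_{m_R}(R/I_n))\le\sum_{k<n}\lambda_R(H^0_{m_R}(I_k/I_{k+1}))$ is correct, but the next step --- that the right side grows like $n^{\ell(\mathcal I)}$ because it is governed by the dimension of the fiber cone --- is exactly the step that fails: $\bigoplus_k H^0_{m_R}(I_k/I_{k+1})$ is a module over the non-Noetherian ring $\bigoplus_k I_k/I_{k+1}$, so it need not be finitely generated, its graded pieces need not be uniformly annihilated by a fixed power of $m_R$, and there is no polynomial Hilbert function to compare against $\ell(\mathcal I)$. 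The paper instead deduces this direction in one line from the highly nontrivial Theorem \ref{Theorem2} (which is specific to $\QQ$-divisorial filtrations and relies on resolution of singularities): $\epsilon(\mathcal I)>0$ forces $H^0_{m_R}(R/I_n)\neq 0$ for some $n$, hence $m_R\in\Ass(R/I_n)$, hence $\ell(\mathcal I)=d$ by Theorem \ref{Theorem2}.

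For the direction $\ell(\mathcal I)=d\Rightarrow\epsilon(\mathcal I)>0$, the pivotal claim that ``for $N$ sufficiently divisible, $\mathcal O_Y(-ND)$ becomes $\varphi$-generated'' is not true for a general $\QQ$-divisorial filtration. If $\mathcal O_Y(-ND)$ were $\varphi$-globally generated with $J=\Gamma(Y,\mathcal O_Y(-ND))$, then $J\mathcal O_Y=\mathcal O_Y(-ND)$ would give $\overline{J^n}=\Gamma(Y,\mathcal O_Y(-nND))=I_{nN}$ for all $n$, i.e., $\overline{R[\mathcal I]}$ would be a finite module over the Noetherian ring $\overline{R[J]}$ --- in other words $\mathcal I$ would have Noetherian integral closure. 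This is precisely the Noetherian case, and the theorem would be vacuous as an extension of Theorem \ref{TheoremI2}. What the paper actually does is much more delicate and genuinely asymptotic. Starting from a nonsingular model $X$ with $\mathcal I=\mathcal I(D)$, it locates a prime exceptional divisor $F$ contracting to $m_R$ with $\gamma_F(-D)=0$ and $\gamma_F(-D_1)<a_1$, where $D_1=D-a_1F$. Choosing a small rational $s>0$ and setting $D_s=D-sF$, it produces via Lemma \ref{LemmaG1} a section $\tau_s$ of $\mathcal O_X(-m_sD_s-m_sH_s)$ whose divisor avoids $F$, uses it to inject $H^0(X,\mathcal O_X(nm_sH_s)\otimes\mathcal O_{nm_ssF})$ into the quotient $H^0(X,\mathcal O_X(-nm_sD_s))/H^0(X,\mathcal O_X(-nm_sD))$, and then shows the source grows like $n^d$ by combining Fujita vanishing with a Snapper/Faulhaber computation of $\sum_{j=0}^{ns-1}\chi(\mathcal O_X(nH_s-jF)\otimes\mathcal O_F)$, exhibiting a strictly positive leading coefficient $\sigma(s)$ for $s$ small. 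None of this reduces to a single ideal $J\subset I_N$; the positivity is extracted directly from the cohomology of the infinite family of sheaves $\mathcal O_X(-\lceil nD\rceil)$.

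The obstacles you flag at the end --- verifying $\ell(J)=d$ and controlling $\lambda_R(H^0_{m_R}(I_{nN}/J^n))$ --- are indeed where the proposal collapses, but the deeper reason is that no single ideal $J$ of the kind you describe exists for a non-Noetherian $\QQ$-divisorial filtration.
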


 Theorem \ref{TheoremI4} is not true for general filtrations. Example \ref{Example7}  gives an example of an $\RR$-divisorial filtration $\mathcal I$ in a regular local ring such that $\epsilon(\mathcal I)>0$ but $\ell(\mathcal I)<\dim R$.

Theorem \ref{TheoremI4} is true for any class of excellent local domains for which resolution of singularities is true. Resolution of singularities is true for reduced finite type schemes over an excellent equicharacteristic zero local ring by \cite{H} and for  reduced finite type schemes over an excellent  local ring of dimension $\le 3$ by \cite{L} and  \cite{CJS} (in dimension two) and by \cite{CP} (in dimension 3). The implication $\epsilon(\mathcal I)>0$ implies $\ell(\mathcal I)=d$  follows from the following theorem.

\begin{Theorem}\label{Theorem2}( \cite[Theorem 1.4]{C2}) Let $R$ be an excellent local domain of equicharacteristic 0, or  of dimension $\le 3$. Let $\mathcal I=\{I_n\}$ be a $\QQ$-divisorial filtration on $R$. Then the following are equivalent.
\begin{enumerate}
\item[1)] The analytic spread of $\mathcal I$ is $\ell(\mathcal I)=\dim R$.
\item[2)] There exists $n_0\in \ZZ_{>0}$ such that $m_R\in \mbox{Ass}(R/I_n)$ if $n\ge n_0$.
\item[3)] $m_R\in \mbox{Ass}(R/I_{m_0})$ for some $m_0\in \ZZ_{>0}$.
\end{enumerate}
\end{Theorem}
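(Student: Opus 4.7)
The plan is to translate all three conditions into geometric statements on a birational model of $\Spec R$ that resolves the divisorial valuations defining $\mathcal I$, then deduce the equivalence from vanishing and construction of sections.

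The implication (2) $\Rightarrow$ (3) is trivial. For (3) $\Rightarrow$ (1), I would adapt the classical argument connecting asymptotic associated primes to analytic spread (in the spirit of McAdam's theorem). Given $x\in R$ with $(I_{m_0}:_R x)=m_R$, the element $xt^{m_0}$ gives a nonzero class in the fiber cone $F(\mathcal I)=\bigoplus_{n\ge 0}I_n/m_R I_n$ whose annihilator structure forces $F(\mathcal I)$ to have dimension $d$. One can make this rigorous by passing to a Noetherian graded $R$-subalgebra of the Rees algebra of $\mathcal I$ (which exists for $\QQ$-divisorial filtrations, essentially because some Veronese subfiltration is generated in a single degree up to integral closure) and invoking the standard ideal-theoretic statement there. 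This direction does not require resolution.

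The substantive direction is (1) $\Rightarrow$ (2), which is where resolution of singularities enters, hence the hypotheses on $R$. Choose a projective birational map $\pi:X\to \Spec R$ with $X$ regular on which the valuations defining $\mathcal I$ are realized by prime divisors $E_1,\ldots,E_r$, so that $I_n=\Gamma(X,\mathcal O_X(-nD))$ for a $\QQ$-divisor $D=\sum a_iE_i$ with $a_i\ge 0$. Then $\ell(\mathcal I)=d$ should translate into the geometric condition that at least one $E_i$ with $a_i>0$ is contracted by $\pi$ to the closed point $m_R$. Given such an $E_i$, I would construct for every $n\gg 0$ an element $f_n\in R$ with $m_R f_n\subset I_n$ but $f_n\notin I_n$, yielding $m_R\in\Ass(R/I_n)$. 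The construction uses the short exact sequence
\[
0\to \mathcal O_X(-nD-Z)\to \mathcal O_X(-nD)\to \mathcal O_Z(-nD)\to 0,
\]
where $Z$ is a suitable effective exceptional divisor avoiding $E_i$, together with asymptotic vanishing of $R^1\pi_*\mathcal O_X(-nD-Z)$, to lift a section of $\mathcal O_Z(-nD)$ to a global section of $\mathcal O_X(-nD)$ whose pushforward under $\pi$ has the desired associated prime behavior.

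The principal obstacle is the uniform construction of these sections and the required cohomological vanishing for $\QQ$-divisors on the resolution; the uniformity needed to produce $n_0$ (rather than just a single $n$) in (1) $\Rightarrow$ (2) is the crux, and it is precisely this step that forces the hypotheses on $R$ through the availability of resolution of singularities.
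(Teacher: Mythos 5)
You should note at the outset that the paper does not prove this theorem at all: it is quoted directly as \cite[Theorem 1.4]{C2}, so there is no internal argument to compare your proposal against. Your proposal must therefore be judged on its own internal plausibility and against the counterexamples in this paper.

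There is a genuine gap in your treatment of $(3)\Rightarrow(1)$. You assert that this direction is ``classical McAdam-type'' and ``does not require resolution,'' with the rationality of the coefficients $a_i$ playing no visible role. But Example \ref{Example7} in this very paper shows that $(3)\Rightarrow(1)$ is false for $\RR$-divisorial filtrations: with $I_n=(x^{\lceil n\pi\rceil})$ in $R=k[x]_{(x)}$ one has $m_R\in\Ass(R/I_n)$ for every $n$, yet $\ell(\mathcal I)=0<1=\dim R$. A proof that never invokes rationality of the $a_i$ would prove the false $\RR$-statement as well, so your argument cannot be correct as sketched. The mechanism that kills the analytic spread in the $\RR$-example is precisely the possibility that $r\lceil na_i\rceil>\lceil rna_i\rceil+1$ for some $r$, forcing $f\in I_n$ to have $f^r\in m_R I_{nr}$; rationality of $a_i$ is exactly what rules this out, and any correct proof of $(3)\Rightarrow(1)$ must exploit this. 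In particular, the claim that ``$xt^{m_0}$ gives a nonzero class in the fiber cone whose annihilator structure forces $F(\mathcal I)$ to have dimension $d$'' is not supported: in Example \ref{Example7} one has $I_n/m_RI_n\cong k$ for all $n$, so $F(\mathcal I)$ has plenty of nonzero homogeneous elements, yet $\dim F(\mathcal I)=0$ because the multiplication in the fiber cone degenerates. Nonvanishing of a graded piece says nothing by itself about Krull dimension.

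A second concrete misstatement is that a $\QQ$-divisorial filtration admits a ``Noetherian graded $R$-subalgebra of the Rees algebra\ldots essentially because some Veronese subfiltration is generated in a single degree up to integral closure.'' This is false: $\QQ$-divisorial filtrations (e.g.\ symbolic power filtrations of height-two primes in dimension three, which appear explicitly in the final examples of the paper) frequently have non-Noetherian Rees algebras, and no Veronese subalgebra need be Noetherian either. Indeed, the paper's results such as Theorem \ref{equality} and the discussion around $\overline{R[\mathcal I]}$ only make sense because these objects are typically non-Noetherian. Your proposed reduction to an ideal-theoretic McAdam statement therefore does not get off the ground. By contrast, your sketch of $(1)\Rightarrow(2)$ via a resolution $\pi:X\to\Spec R$, short exact sequences cutting out an exceptional divisor, and asymptotic vanishing of higher direct images is in the correct geometric spirit, and indeed the machinery actually used (visible from the proof of Theorem \ref{TheoremI4} here) revolves around the $\gamma_\Gamma$ invariant of Nakayama type, Fujita vanishing, and intersection numbers on the exceptional fiber; but as written the uniformity claim producing the single $n_0$ is only asserted, not argued.
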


We obtain the following corollary to Theorems \ref{TheoremI4} and \ref{Theorem2}.

 \begin{Corollary}\label{CorI10} Let $R$ be a $d$-dimensional excellent normal local domain of equicharacteristic zero, or an arbitrary excellent local domain of dimension $\le 3$.  Let $\mathcal I$ be a 
$\QQ$-divisorial filtration of $R$. Then $\epsilon(\mathcal I)>0$ if and only if for every representation of $\mathcal I=\{I_n\}$ as a rational divisorial filtration
$I_n=I(\nu_1)_{na_1}\cap \cdots \cap I(\nu_r)_{na_r}$ for $n\ge 0$,  the maximal ideal $m_R$ is the center $m_{\nu_i}\cap R$ of  at least one of the  $\nu_i$.
\end{Corollary}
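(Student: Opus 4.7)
The plan is to deduce the corollary by combining Theorems \ref{TheoremI4} and \ref{Theorem2} with a short primary-decomposition argument. Together, those two theorems yield $\epsilon(\mathcal I) > 0$ if and only if $m_R \in \Ass(R/I_n)$ for some (equivalently, all sufficiently large) $n > 0$. The bridge to the statement about centers is the elementary fact that, for any discrete valuation $\nu$ on the fraction field of $R$ and any $b > 0$, the valuation ideal $I(\nu)_b$ is primary for $m_\nu \cap R$: if $fg \in I(\nu)_b$ and $\nu(f) < b$, then $\nu(g) > 0$, hence $g \in m_\nu \cap R$.

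For the forward implication, assume $\epsilon(\mathcal I) > 0$, fix any representation $I_n = I(\nu_1)_{na_1} \cap \cdots \cap I(\nu_r)_{na_r}$, and pick $n$ with $m_R \in \Ass(R/I_n)$. The representation is a primary decomposition of $I_n$, so every associated prime of $R/I_n$ lies in $\{m_{\nu_i} \cap R : 1 \le i \le r\}$; in particular $m_R = m_{\nu_j} \cap R$ for some $j$.

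For the converse I argue the contrapositive: assuming $\epsilon(\mathcal I) = 0$, I produce a representation in which no $\nu_i$ has center $m_R$. By Theorems \ref{TheoremI4} and \ref{Theorem2}, $m_R \notin \Ass(R/I_n)$ for every $n \ge 1$. Starting from an arbitrary representation $I_n = \bigcap_i I(\nu_i)_{na_i}$, partition the index set into $S = \{i : m_{\nu_i} \cap R = m_R\}$ and its complement $T$, and write $I_n = J_n \cap K_n$ with $J_n = \bigcap_{i \in S} I(\nu_i)_{na_i}$ (an $m_R$-primary ideal, or $R$ if $S = \emptyset$) and $K_n = \bigcap_{i \in T} I(\nu_i)_{na_i}$. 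Combining $J_n$ with an irredundant primary decomposition of $K_n$ yields a primary decomposition of $I_n$; were $I_n \subsetneq K_n$, then $J_n$ would be a non-redundant $m_R$-primary component and $m_R$ would lie in $\Ass(R/I_n)$, a contradiction. Thus $I_n = K_n$ for every $n \ge 1$ (and trivially for $n = 0$), giving the desired representation $I_n = \bigcap_{i \in T} I(\nu_i)_{na_i}$ with no $\nu_i$ centered at $m_R$. I expect this converse step to be the main delicate point, since it requires verifying that deletion of the center-$m_R$ valuations yields a bona fide rational divisorial representation of the same filtration, which is precisely what the associated-prime argument supplies.
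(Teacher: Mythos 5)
Your proof is correct and fills in exactly the step the paper leaves implicit: the paper states this corollary as a consequence of Theorems \ref{TheoremI4} and \ref{Theorem2} without further argument, and your primary-decomposition bridge is the natural way to convert the associated-prime criterion $m_R\in\Ass(R/I_n)$ into the statement about centers. The two observations you isolate are exactly what is needed: each $I(\nu_i)_{\lceil na_i\rceil}$ is $m_{\nu_i}\cap R$-primary (so $\Ass(R/I_n)$ is contained in the set of centers, for any representation), and conversely, if $m_R\notin\Ass(R/I_n)$ for every $n$, then in any representation the $m_R$-primary part $J_n$ must be redundant against $K_n=\bigcap_{i\in T}I(\nu_i)_{\lceil na_i\rceil}$, since otherwise passing to an irredundant decomposition with distinct primes would exhibit $m_R$ as an associated prime; hence $I_n=K_n$ and the valuations centered at $m_R$ may be deleted.
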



In the final section we consider epsilon multiplicity under  integral closure of filtrations. 
The following theorem about epsilon multiplicities of ideals is a corollary of  Theorem 2.3 \cite{UV}.

\begin{Theorem}\label{PropE6}
 Suppose that $R$ is a universally catenary local ring and $J\subset I$ are ideals in $R$. Then $\epsilon(I_{\mathfrak p})=\epsilon(J_{\mathfrak p})$ for all $\mathfrak p\in\Spec R$ if and only if $\overline J=\overline I$.
\end{Theorem}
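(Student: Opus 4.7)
My plan is to derive this theorem as a direct corollary of Ulrich--Validashti's Theorem 2.3 in \cite{UV}, handling the two implications in turn.

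For the ``only if'' direction, I would assume $\overline{J}=\overline{I}$ and use that integral closure of ideals commutes with localization, so that $\overline{J_\mathfrak{p}}=\overline{I_\mathfrak{p}}$ in $R_\mathfrak{p}$ for every $\mathfrak{p}\in \Spec R$. It then suffices to show that in any local ring $(S,m_S)$ of dimension $d$, ideals $J\subseteq I$ with the same integral closure have the same epsilon multiplicity. Since $\overline{I^n}$ depends only on $\overline{I}$, we have $\overline{J^n}=\overline{I^n}$ for all $n$, and the chain $J^n\subseteq I^n\subseteq \overline{J^n}$ allows a direct comparison of the local cohomology modules $H^0_{m_S}(S/J^n)$ and $H^0_{m_S}(S/I^n)$; the resulting difference in lengths is controlled by $\lambda_S(\overline{J^n}/J^n)$, which grows as $o(n^d)$ under the standing hypotheses, yielding $\epsilon(J)=\epsilon(I)$.

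For the converse direction, I would argue the contrapositive: if $\overline{J}\subsetneq\overline{I}$, produce a prime $\mathfrak{p}$ with $\epsilon(J_\mathfrak{p})\neq\epsilon(I_\mathfrak{p})$. By Rees's valuative criterion for integral dependence, there exists a discrete valuation $v$ on the fraction field of $R/\mathfrak{q}$ for some minimal prime $\mathfrak{q}$ of $R$ satisfying $\min_{f\in I} v(f) < \min_{g\in J} v(g)$; taking $\mathfrak{p}$ to be the center of $v$ in $R$ yields a strict inclusion $\overline{J_\mathfrak{p}}\subsetneq\overline{I_\mathfrak{p}}$ in $R_\mathfrak{p}$. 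Applying UV~2.3 in the universally catenary local ring $R_\mathfrak{p}$ now produces $\epsilon(J_\mathfrak{p})\neq\epsilon(I_\mathfrak{p})$, as required.

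The main obstacle is precisely this converse direction, which is the substantive content of UV~2.3: a far-reaching generalization of Rees's multiplicity theorem to ideals that need not be $m_R$-primary in rings that need not be equidimensional. Its proof in \cite{UV} relies on the full machinery of $j$- and epsilon multiplicities, and I would invoke it as a black box rather than attempt to reconstruct it. A minor technical point in the forward direction is justifying the bound $\lambda_S(\overline{J^n}/J^n)=o(n^d)$; in this generality it may itself require inputs from \cite{UV} or the parallel theory for $j$-multiplicity, but is not a substantial difficulty.
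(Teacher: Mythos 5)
The paper's own proof is a one-line citation: this statement \emph{is} (an immediate corollary of) Theorem 2.3 of \cite{UV}, and no further argument is given. Your proposal ultimately rests on that same black box, but the scaffolding you add around it has two real problems.

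For the converse direction your argument has a logical gap. After using Rees's valuative criterion to find a prime $\mathfrak p$ (the center of the discrete valuation) with $\overline{J_{\mathfrak p}}\ne\overline{I_{\mathfrak p}}$, you apply UV~2.3 to the local ring $R_{\mathfrak p}$. But UV~2.3, applied to $R_{\mathfrak p}$, only yields the existence of \emph{some} prime $\mathfrak q\in\Spec R_{\mathfrak p}$ (i.e.\ some $\mathfrak q\subset\mathfrak p$ in $R$) with $\epsilon(J_{\mathfrak q})\ne\epsilon(I_{\mathfrak q})$; it does not give the claimed conclusion $\epsilon(J_{\mathfrak p})\ne\epsilon(I_{\mathfrak p})$ at the maximal ideal of $R_{\mathfrak p}$. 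That weaker conclusion would still suffice, but it renders the whole detour through Rees's criterion pointless: applying UV~2.3 directly to $R$ already gives the contrapositive. In the forward direction, the controlling quantity is stated incorrectly: $\lambda_S(\overline{J^n}/J^n)$ is in general not finite when $J$ is not $m_S$-primary, so ``$=o(n^d)$'' is not even well-posed. The quantity that actually enters is $\lambda_S\bigl(H^0_{m_S}(I^n/J^n)\bigr)$, and because $H^0_{m_S}$ is only left exact one must control a piece of $H^1_{m_S}(I^n/J^n)$ as well; establishing the required $o(n^d)$ bounds for ideals with the same integral closure is not elementary and is essentially the content of the forward half of UV~2.3 (compare Proposition~2.1(b) of \cite{JMV}, which records $\epsilon(I)=\epsilon(\{\overline{I^n}\})$). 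You flag this yourself, but it means the forward direction is not an independent argument either. The intended proof here is simply the citation of \cite{UV}; the embellishments you propose either overstate what that theorem delivers at a given prime or quietly re-invoke it.
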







Example \ref{Example6} shows that there are filtrations $\mathcal J\subset\mathcal I$ such that $\epsilon(\mathcal J_{\mathfrak p})=\epsilon(\mathcal I_{\mathfrak p})$ for all $\mathfrak p\in \Spec R$ but $\overline{R[\mathcal J]}\ne \overline{R[\mathcal I]}$ and Example \ref{Example 0} shows that there are filtrations $\mathcal J\subset \mathcal I$  such that $\overline{R[\mathcal I]}=\overline {R[\mathcal J]}$ but $\epsilon(\mathcal I)\neq\epsilon(\mathcal J)$. Thus Theorem \ref{PropE6} and Theorem \ref{equality} (stated below), do not extend to arbitrary filtrations.

We show that for some naturally occurring filtrations equality of epsilon multiplicity is equivalent to the integral closures of their Rees algebras being the same. Filtrations which are $s$-divisorial or $s$-bounded are defined in Section \ref{closure}.

\begin{Theorem}\label{equality}
	Let $(R,\mm)$ be an excellent local domain and $\mathcal I$ be a filtration of ideals in $R$. Then the following hold.
	\begin{enumerate}
		\item[{(1)}] Suppose $\mathcal J$ is an $s$-divisorial filtration such 
		that $\mathcal J\subset \mathcal I$. Then the following are equivalent.
		\begin{enumerate}
			\item[{(i)}] $\epsilon(\mathcal I_\mfp)=\epsilon(\mathcal J_\mfp)$ for all $\mfp\in\Spec R$. 
			\item[{(ii)}] $\epsilon(\mathcal I_\mfp)=\epsilon(\mathcal J_\mfp)$ for all $\mfp\in\Spec R$ such that $\ell(\mathcal J_\mfp)=\dim R_\mfp$. 
			\item[{(iii)}] $\epsilon(\mathcal I_\mfp)=\epsilon(\mathcal J_\mfp)$ for all $\mfp\in\Spec R$ with $\dim R/\mfp=s$.
			\item[{(iv)}] $\mathcal I=\mathcal J$.
			\item[{(v)}] $\overline{R[\mathcal I]}=\overline {R[\mathcal J]}$.
		\end{enumerate}
		\item[{(2)}] Suppose $\mathcal J$
		is a bounded $s$-filtration such 
		that $\mathcal J\subset \mathcal I$. Then $\overline{R[\mathcal I]}=\overline {R[\mathcal J]}$  if and only if $\epsilon(\mathcal I_\mfp)=\epsilon(\mathcal J_\mfp)$ for all $\mfp\in\Spec R$ with $\dim R/\mfp=s$.
	\end{enumerate}
\end{Theorem}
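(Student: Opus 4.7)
In Part (1) the easy implications are (iv) $\Rightarrow$ (i) $\Rightarrow$ (ii) $\Rightarrow$ (iii) (restricting to fewer primes only weakens the condition) and (iv) $\Leftrightarrow$ (v), which rests on the standard identification $\overline{R[\mathcal F]} = R[\overline{\mathcal F}]$ together with the fact that an $s$-divisorial filtration is integrally closed: since $\mathcal J = \overline{\mathcal J}$, (v) forces $\mathcal I \subseteq \overline{\mathcal I} = \overline{\mathcal J} = \mathcal J$, and the reverse inclusion is given.

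The heart of the argument is (iii) $\Rightarrow$ (iv). I would fix an $s$-divisorial presentation $J_n = \bigcap_i I(\nu_i)_{na_i}$ in which each divisorial valuation $\nu_i$ has center $\mfp_i$ of coheight $s$ in $R$. For any prime $\mfp$ with $\dim R/\mfp=s$, only those $\nu_j$ with $\mfp_j \subseteq \mfp$ survive upon localization at $\mfp$, and by a coheight count they must satisfy $\mfp_j = \mfp$. Hence $\mathcal J_\mfp$ is a divisorial filtration on $R_\mfp$ whose defining valuations are all centered on the maximal ideal, so $\ell(\mathcal J_\mfp) = \dim R_\mfp$, and Theorem \ref{TheoremI4} gives $\epsilon(\mathcal J_\mfp) > 0$. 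The decisive step is a strict monotonicity of epsilon multiplicity: if $\mathcal J \subseteq \mathcal I$ on a local ring where $\mathcal J$ is divisorial with maximal analytic spread and $\mathcal I \not\subseteq \overline{\mathcal J}$, then $\epsilon(\mathcal I) > \epsilon(\mathcal J)$. I would extract this from the volume/Okounkov-body interpretation of $\epsilon$ for divisorial filtrations of full analytic spread, as developed in \cite{C1} and \cite{C2}: strictly enlarging the graded linear system strictly increases the leading coefficient of $\lambda_R(I_n : m_R^{\infty}/I_n)$. Granting this, (iii) yields $\mathcal I_\mfp \subseteq \overline{\mathcal J_\mfp} = \mathcal J_\mfp$ at every coheight-$s$ prime $\mfp$. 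Because the defining valuations of $\mathcal J$ all have coheight-$s$ centers, $\mathcal J$ is recovered from its localizations at such primes; the same recovery shows $\mathcal I \subseteq \mathcal J$, giving (iv).

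For Part (2) the same template applies. The forward direction reduces to invariance of $\epsilon$ under integral closure of the Rees algebra, which in the filtration setting is controlled by Theorem \ref{ThmE1} (ensuring $\epsilon$ exists as a limit under the $A(c)$ property that bounded $s$-filtrations supply). The backward direction repeats the strict-monotonicity argument, now concluding only $\overline{R[\mathcal I]} = \overline{R[\mathcal J]}$ rather than literal equality, which is the correct statement because a bounded $s$-filtration need not itself be integrally closed.

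The principal obstacle is the strict monotonicity of $\epsilon$ for divisorial (or bounded $s$-) filtrations of maximal analytic spread. This is precisely the property that fails for arbitrary filtrations, as witnessed by Examples \ref{Example6} and \ref{Example 0} cited in the introduction, and it is the reason the hypothesis must restrict to filtrations with the right divisorial structure.
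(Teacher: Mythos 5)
Your proposal takes a genuinely different route from the paper for the crucial implication (iii)\,$\Rightarrow$\,(iv). You localize at each coheight-$s$ prime $\mfp$, observe $\mathcal J_\mfp$ is $\mfp R_\mfp$-primary (or trivial), invoke a Rees-type rigidity at that prime, and then globalize via the recovery $J_n=\bigcap_{\mfp}(J_nR_\mfp\cap R)$ over coheight-$s$ primes (which is legitimate because all the divisorial valuations defining $\mathcal J$ have centers of coheight $s$). The paper instead uses Proposition~\ref{epeq} to translate the local $\epsilon$-equalities into the single global equality $e_s(\mathcal I)=e_s(\mathcal J)$ via the associativity formula~(\ref{two}), and then cites the global Rees-type theorem \cite[Theorem 6.7]{CS} for $s$-divisorial filtrations; the paper's proof of Part (2) similarly runs through Proposition~\ref{epeq} and \cite[Theorem 5.1, 7.4]{CS}. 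In effect you are unrolling the localize--Rees--globalize argument that underlies \cite[Theorem 6.7]{CS}, which makes the proof more self-contained; the paper's route blackboxes that step and handles Parts (1) and (2) uniformly through $e_s$.

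However, there are several inaccuracies worth flagging. (a) The ``strict monotonicity'' is stated with the sign reversed: for an inclusion $\mathcal J\subset\mathcal I$ of $m_R$-primary filtrations one has $\epsilon(\mathcal J)=e(\mathcal J)\ge e(\mathcal I)=\epsilon(\mathcal I)$, i.e.\ the \emph{smaller} filtration has the \emph{larger} multiplicity, with equality iff the Rees algebras have the same integral closure. You claim the opposite (``strictly enlarging the graded linear system strictly increases the leading coefficient of $\lambda_R(I_n:m_R^\infty/I_n)$''). Your contradiction argument still closes because either strict inequality contradicts (iii), but the stated direction is wrong and suggests a misreading of how $\epsilon$ behaves under inclusion. (b) What you call a consequence of the ``volume/Okounkov-body interpretation'' is in fact the Rees theorem for (bounded or divisorial) filtrations, e.g.\ \cite[Theorem 1.4]{C} or \cite[Theorem 13.1]{C}; the Okounkov body gives existence of the limit, not the rigidity. (c) You do not need Theorem~\ref{TheoremI4} to see $\epsilon(\mathcal J_\mfp)>0$ at a center of one of the $\nu_i$: this is immediate since $\mathcal J_\mfp$ is $\mfp R_\mfp$-primary, so $\epsilon(\mathcal J_\mfp)=e(\mathcal J_\mfp)>0$; invoking Theorem~\ref{TheoremI4} would also (needlessly) import hypotheses on $R$ not in the theorem's statement. (d) For the forward direction of Part (2), Theorem~\ref{ThmE1} is not the relevant tool; again one wants the invariance of multiplicity under integral closure of the Rees algebra, combined with the same localize--globalize scheme through the $s$-divisorial model $\mathcal C$ with $\overline{R[\mathcal J]}=R[\mathcal C]$. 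Finally, (ii)\,$\Rightarrow$\,(iii) is not literally a ``restricting to fewer primes'' implication; one needs the small observation (which you do make in substance later) that a coheight-$s$ prime either lies outside $V(J_1)$ (both $\epsilon$'s vanish) or has $\ell(\mathcal J_\mfp)=\dim R_\mfp$.
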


\section{Notation}\label{notation}
Let $(R, m_R)$  be a Noetherian local ring of dimension $d$. 
A descending chain
$$
R=I_0\supset I_1\supset I_2\supset\cdots
$$
of ideals in $R$ is called a filtration if $I_mI_n\subset I_{m+n}$ for all $m,n\in \NN$. For a filtration $\mathcal I=\{I_n\}_{n\in \NN}$, we define the Rees algebra of $\mathcal I$ to be the graded $R$-algebra $R[\mathcal I]=\sum_{n\ge 0}I_nt^n$. This generalizes the Rees algebra $R[I]=\sum_{n\ge 0}I^nt^n$ of an ideal $I$ in $R$. A filtration $\mathcal I$ is called a Noetherian filtration if $R[\mathcal I]$ is a finitely-generated $R$-algebra. Otherwise it is called a non-Noetherian filtration. If $I\subset R$ is an ideal, then $V(I):=\{\mathfrak p\in \mbox{Spec}(R)\mid I\subset \mathfrak p\}$. For a filtration $\mathcal I=\{I_n\}$, we have \cite[Lemma 3.1]{CS},
$$
V(I_1)=V(I_n)\mbox{ and }\dim R/I_1=\dim R/I_n\mbox{ for all }n\ge 1.
$$
 For a filtration $\mathcal I=\{I_n\}$, by $\mathcal I_\mfp$, we denote the filtration $\{I_nR_\mfp\}$
 for any $\mfp\in\Spec R$. For any two filtrations $\mathcal I=\{I_n\}, \mathcal J=\{J_n\}$, by $\mathcal J\subset \mathcal I$, we mean $J_n\subset I_n$ for all $n\geq 0$ and by $\mathcal J= \mathcal I$, we mean $J_n= I_n$ for all $n\geq 0$. 

The analytic spread of a filtration $\mathcal I$ of a local ring is defined to be $\ell(\mathcal I)=\dim R[\mathcal I]/m_RR[\mathcal I]$. It is shown in \cite[Lemma 3.6]{CSas} that $\ell(\mathcal I)\le \dim R$. Even for symbolic filtrations we can have that $\ell(\mathcal I)=0$ (\cite[Theorem 1.11]{CSas}). A different definition of analytic spread is given in \cite{BS}, \cite{HKTT} and \cite{DM}.

Now let $R$ be a Noetherian local domain of dimension $d$ with quotient field $K$. Let $\nu$ be a discrete valuation of $K$ with valuation ring $\mathcal O_{\nu}$ and maximal ideal $m_{\nu}$. Suppose that $R\subset \mathcal O_{\nu}$. Then for $n\in \NN$, define valuation ideals 
$$
I(\nu)_n=\{f\in R\mid \nu(f)\ge n\}=m_{\nu}^n\cap R.
$$
A discrete valued filtration of $R$ is a filtration $\mathcal I=\{I_n\}$ such that there exist discrete valuations $\nu_1,\ldots,\nu_r$ and $a_1,\ldots, a_r\in \RR_{>0}$ such that for all $m\in \NN$,
$$
I_m=I(\nu_1)_{\lceil ma_1\rceil}\cap\cdots\cap I(\nu_r)_{\lceil ma_r\rceil}
$$
 where $\lceil x\rceil$ denotes the round up of a real number $x$.
A discrete valued filtration is called integral (rational) if $a_i\in \ZZ_{>0}$ for all $i$ ($a_i\in \QQ_{>0}$) for all $i$. 

A divisorial filtration of $R$ is a  valuation $\nu$ of the quotient field $K$ of $R$ such that $\nu$ is positive on $R$ and letting $P=R\cap m_R$, the transcendence degree of the residue field of the valuation ring of $\nu$ over the residue field of $R/P$ is ${\rm ht}(P)-1$. If $R$ is excellent, then a valuation $\nu$ of $K$ is a divisorial valuation if and only if the valuation ring $\mathcal O_{\nu}$ of $\nu$ is essentially of finite type over $R$. A divisorial valuation is a discrete valuation.

A divisorial filtration of $R$ is a discrete valued filtration 
$$
\{I_m=I(\nu_1)_{\lceil ma_1\rceil}\cap\cdots\cap I(\nu_r)_{\lceil ma_r\rceil}\}
$$
where all the discrete valuations $\nu_1,\ldots,\nu_r$ are divisorial valuations.  A divisorial filtration is called integral (rational) if $a_i\in \ZZ_{>0}$ for all $i$ ($a_i\in \QQ_{>0}$) for all $i$.

It is shown in \cite[Lemma 3.6]{CS} that for a filtration $\mathcal I=\{I_n\}$, the integral closure of $R[\mathcal I]$ in $R[t]$ is
$$
\overline{R[\mathcal I]}=\sum_{m\ge 0}J_mt^m
$$
where $\{J_m\}$ is the filtration
$$
J_m=\{f\in R\mid f^r\in \overline{I_{rm}}\mbox{ for some }r>0\}.
$$

The following result follows using the same lines of proof of \cite[Lemma 5.7]{C}
\begin{Lemma}\label{integrallyclosed}
If $\mathcal I$ is a discrete valued filtration then $\overline{R[\mathcal I]}=R[\mathcal I]$.
\end{Lemma}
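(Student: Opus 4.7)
The plan is to use the explicit description of the integral closure of the Rees algebra recalled just before the lemma: $\overline{R[\mathcal I]}=\sum_{m\ge 0}J_m t^m$ where
$$
J_m=\{f\in R\mid f^r\in \overline{I_{rm}}\text{ for some }r>0\}.
$$
Since the inclusion $I_m\subset J_m$ is automatic, it suffices to prove the reverse inclusion $J_m\subset I_m$ for every $m\ge 0$. Thus the whole statement reduces to showing that for every $m$, if $f\in R$ and $f^r\in\overline{I_{rm}}$ for some $r>0$, then $f\in I(\nu_i)_{\lceil m a_i\rceil}$ for each $i=1,\ldots,r$ (and hence $f\in I_m$).

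The key observation is that each valuation ideal $m_{\nu_i}^k$ in $\mathcal O_{\nu_i}$ is integrally closed because $\mathcal O_{\nu_i}$ is a (discrete) valuation ring. Using the Krull characterization of the integral closure of an ideal in a Noetherian domain, namely $\overline{I_{rm}}\subset I_{rm}\mathcal O_{\nu_i}\cap R$, combined with the containment $I_{rm}\subset I(\nu_i)_{\lceil rm a_i\rceil}=m_{\nu_i}^{\lceil rm a_i\rceil}\cap R$, one obtains
$$
\overline{I_{rm}}\;\subset\; m_{\nu_i}^{\lceil rm a_i\rceil}\cap R\;=\;I(\nu_i)_{\lceil rm a_i\rceil}.
$$

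With this in hand, the rest is a direct valuation-theoretic computation. From $f^r\in\overline{I_{rm}}\subset I(\nu_i)_{\lceil rm a_i\rceil}$ we get $r\,\nu_i(f)=\nu_i(f^r)\ge \lceil rm a_i\rceil\ge rm a_i$, hence $\nu_i(f)\ge m a_i$. Since $\nu_i(f)$ is an integer this gives $\nu_i(f)\ge \lceil m a_i\rceil$, i.e., $f\in I(\nu_i)_{\lceil m a_i\rceil}$. Intersecting over $i=1,\ldots,r$ yields $f\in I_m$, proving $J_m=I_m$ and therefore $\overline{R[\mathcal I]}=R[\mathcal I]$.

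I do not expect a significant obstacle here; the proof is essentially the same extraction of valuations done in \cite[Lemma 5.7]{C}, adapted to the rounding-up definition of discrete valued filtrations. The only mildly delicate point is the rounding step at the end, which is handled by the monotonicity $r\nu_i(f)\ge rm a_i$ together with integrality of $\nu_i(f)$, allowing one to pass from an inequality in $\RR$ to the correct ceiling.
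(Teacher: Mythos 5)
Your proposal is correct and takes what is almost certainly the same approach as the paper, which simply cites the proof of Lemma~5.7 of the reference~[C] (\emph{The Minkowski Equality of Filtrations}): reduce via the graded description $\overline{R[\mathcal I]}=\sum_m J_m t^m$ to the inclusion $J_m\subset I_m$, then use the valuative criterion $\overline{I_{rm}}\subset I_{rm}\mathcal O_{\nu_i}\cap R\subset I(\nu_i)_{\lceil rma_i\rceil}$ and the integrality of $\nu_i(f)$ to pass from $\nu_i(f)\ge ma_i$ to $\nu_i(f)\ge\lceil ma_i\rceil$. The only blemish is a notational collision: you use $r$ both for the exponent in $f^r\in\overline{I_{rm}}$ and for the number of valuations $\nu_1,\dots,\nu_r$ in the definition of a discrete valued filtration, but this does not affect the validity of the argument since the estimate is carried out one valuation at a time.
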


If $I\subset R$ is an ideal in $R$, we define
$I^{\rm sat}=I:m_R^{\infty}=\cup_{n=1}^{\infty}I:m_R^n$.

\section{Epsilon multiplicity of filtrations} \label{SecEF} 







The property $A(c)$ of a filtration is defined in the introduction. We obtain the following theorem, showing that epsilon multiplicity, defined in (\ref{epdef}), exits  as a limit for discrete valued filtrations. 

\begin{Theorem}\label{ep1}
	Let $R$ be a  Noetherian local domain, $\mathcal J$  a discrete valued filtration and $\mfp$ any prime ideal in $R$. Then the filtration $\mathcal J_\mfp$ satisfies $A(c_\mfp)$ for some $c_\mfp\in\ZZ_{>0}$. 
	
	Moreover, if $R_\mfp$ is an  analytically unramified local domain then $\epsilon(\mathcal J_\mfp)$ exists as a limit. In particular, epsilon multiplicity exists as a limit for  discrete valued filtrations of an analytically unramified local domain.
%
	
\end{Theorem}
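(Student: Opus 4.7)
My plan is to reduce to analyzing a discrete valued filtration on the local ring $R_\mfp$ and to split its defining valuations according to where their centers sit. Write the given filtration as
$$
J_n = I(\nu_1)_{\lceil na_1\rceil}\cap\cdots\cap I(\nu_r)_{\lceil na_r\rceil},
$$
with discrete valuations $\nu_i$ of $K=\mathrm{Frac}(R)$ satisfying $R\subset\mathcal O_{\nu_i}$, and let $P_i=m_{\nu_i}\cap R$. Since finite intersections commute with localization, $J_nR_\mfp=\bigcap_i I(\nu_i)_{\lceil na_i\rceil}R_\mfp$. I would first verify that when $P_i\not\subset\mfp$, some power of a chosen element of $P_i\setminus\mfp$ is a $\nu_i$-large unit of $R_\mfp$, forcing $I(\nu_i)_nR_\mfp=R_\mfp$, so this factor can be discarded; and when $P_i\subset\mfp$, every $s\in R\setminus\mfp$ is a $\nu_i$-unit, so $R_\mfp\subset\mathcal O_{\nu_i}$ and (writing an element as $f/s$) $I(\nu_i)_nR_\mfp=\{g\in R_\mfp\mid\nu_i(g)\ge n\}$. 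Hence $\mathcal J_\mfp$ is again a discrete valued filtration on $R_\mfp$, indexed by those $\nu_i$ with $P_i\subset\mfp$.

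Next I would split the surviving valuations into two groups by whether $P_i=\mfp$ or $P_i\subsetneq\mfp$. In the first case, $\nu_i$ is positive on every nonzero element of $\mfp R_\mfp$; since $\mfp R_\mfp$ is finitely generated and $\nu_i$ is integer-valued, $\alpha_i:=\min\{\nu_i(x)\mid x\in\mfp R_\mfp\setminus\{0\}\}$ is a positive integer, and any $f\in(\mfp R_\mfp)^{cn}$ satisfies $\nu_i(f)\ge cn\alpha_i$. In the second case, the center of $\nu_i$ in $R_\mfp$ is a proper sub-prime of $\mfp R_\mfp$, so there exists $h\in\mfp R_\mfp$ with $\nu_i(h)=0$; then $h^k\in(\mfp R_\mfp)^k$ has $\nu_i(h^k)=0$, which I exploit as follows. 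For any $f\in J_nR_\mfp:(\mfp R_\mfp)^\infty$, fixing $k$ with $f(\mfp R_\mfp)^k\subset J_nR_\mfp$ yields $\nu_i(fh^k)\ge\lceil na_i\rceil$, hence $\nu_i(f)\ge\lceil na_i\rceil$, without any use of the $(\mfp R_\mfp)^{cn}$-hypothesis.

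To close the argument, I choose $c_\mfp$ large enough that $c_\mfp\alpha_i\ge a_i+1$ for every valuation of the first group---for instance $c_\mfp=\max_i\lceil(a_i+1)/\alpha_i\rceil$---so that any $f\in(\mfp R_\mfp)^{c_\mfp n}$ also satisfies $\nu_i(f)\ge c_\mfp n\alpha_i\ge na_i+n\ge\lceil na_i\rceil$ for each such $i$. Combining the two groups, every $f\in(J_nR_\mfp:(\mfp R_\mfp)^\infty)\cap(\mfp R_\mfp)^{c_\mfp n}$ satisfies $\nu_i(f)\ge\lceil na_i\rceil$ for all surviving $i$, and by the identification of valuation ideals after localization this is exactly $f\in J_nR_\mfp$. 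This establishes property $A(c_\mfp)$, and the moreover statement is then immediate from Theorem \ref{ThmE1} applied to $\mathcal J_\mfp$ on the analytically unramified ring $R_\mfp$.

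The main obstacle is the careful bookkeeping in the localization step: valuations whose centers lie strictly below $\mfp$ admit no uniform $(\mfp R_\mfp)^{cn}$-bound of the form $\nu_i(f)\ge\lceil na_i\rceil$, and it is precisely here that the saturation hypothesis, not the $(\mfp R_\mfp)$-adic one, must do the work. Recognizing this dichotomy and choosing $c_\mfp$ so as to handle only the remaining valuations is what makes property $A(c_\mfp)$ hold on the nose.
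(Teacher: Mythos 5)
Your argument is correct and follows essentially the same route as the paper's: localize and discard the valuations whose center is not contained in $\mfp$, then split the surviving valuations into those centered exactly at $\mfp$ (controlled by taking $c_\mfp$ large) and those centered strictly below $\mfp$ (unaffected by saturation, as you show via the element $h$ with $\nu_i(h)=0$). The paper organizes this as a case split, takes $c_\mfp=\max_j\lceil a_{1j}\rceil$ rather than your $\max_i\lceil(a_i+1)/\alpha_i\rceil$ (both bounds work, since $n\lceil a\rceil\ge\lceil na\rceil$), and simply asserts the identification of $(J_nR_\mfp)^{sat}$ as the intersection over the strictly-contained centers, whereas you justify that step explicitly; these are cosmetic differences, not a different proof.
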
	

\begin{proof}
	Let $\mathcal J=\{J_n=I(\nu_1)_{\lceil na_1\rceil }\cap\cdots\cap I(\nu_r)_{\lceil na_r\rceil}\}$. Reindexing the $\nu_i$ as $\nu_{ij}$, let
	$\mfp_i=\mathfrak m_{\nu_{ij}}\cap R$ for $i=1,\ldots,l$ and $j=1,\ldots,k_i$  be the distinct centers of the discrete valuations $\nu_1,\ldots,\nu_r$ where $k_1+\cdots+k_l=r$.
	
	Let $\mfp\in\Spec R$. If $\mfp_i\nsubseteq\mfp$ for all $i=1,\ldots,l$ then $\mfp\notin V(J_1)$ and hence $(J_nR_\mfp)^{sat}=J_nR_\mfp$. We take $c_{\mfp}=1$ in this case. 
	
	Suppose $\mfp_i\subseteq\mfp$ for some $i\in\{1,\ldots,l\}$. Without loss of generality, we assume $\mfp_i\subseteq \mfp$ for all $1\leq i\leq t$ and  $\mfp_i\nsubseteq\mfp$ for all $t+1\leq i\leq l$.
	
	Case 1: Suppose $\mfp_i\subsetneq \mfp$ for all $1\leq i\leq t$. Then $$(J_nR_\mfp)^{sat}=\bigcap\limits_{\substack{\mathfrak m_{\nu_{ij}}\cap R=\mfp_i\\1\leq i\leq t, 1\leq j\leq k_i}}I(\nu_{ij})_{\lceil na_{ij}\rceil }R_\mfp=J_nR_\mfp.$$ Thus we take $c_{\mfp}=1$. 
	
	Case 2: Suppose $\mfp_i= \mfp$ for some $i\in\{1,\ldots,t\}$. Without loss of generality, we assume $i=1$. Then $$(J_nR_\mfp)^{sat}=\bigcap\limits_{\substack{\mathfrak m_{\nu_{ij}}\cap R=\mfp_i\\2\leq i\leq t, 1\leq j\leq k_i}}I(\nu_{ij})_{\lceil na_{ij}\rceil }R_\mfp.$$
	
	Now $\mfp_1^{\lceil na_{1j}\rceil}\subset I(\nu_{i1})_{\lceil na_{1j}\rceil}$ for all $1\leq j\leq k_1$. Let $c_\mfp=\max\{\lceil a_{11}\rceil,\ldots,\lceil a_{1k_1}\rceil\}$. Then 
	\begin{eqnarray*}
		(J_nR_\mfp)^{sat}\bigcap\mfp^{nc_\mfp}R_{\mfp}
		&=&\bigcap\limits_{\substack{\mathfrak m_{\nu_{ij}}\cap R=\mfp_i\\2\leq i\leq t, 1\leq j\leq k_i}}I(\nu_{ij})_{\lceil na_{ij}\rceil }R_\mfp\cap\mfp^{nc_\mfp}R_{\mfp}
		\\&\subset& \bigcap\limits_{\substack{\mathfrak m_{\nu_{ij}}\cap R
				=\mfp_i\\2\leq i\leq t, 1\leq j\leq k_i}}I(\nu_{ij})_{\lceil na_{ij}\rceil }R_\mfp\bigcap\big(\bigcap\limits_{1\leq j\leq k_1}I(\nu_{1j})_{\lceil na_{1j}\rceil }R_\mfp\big)\bigcap\mfp^{nc_\mfp}R_{\mfp}
		\\&\subset& J_nR_\mfp\bigcap\mfp^{nc_\mfp}R_{\mfp}.
	\end{eqnarray*}
	Thus if $R_\mfp$ is an  analytically unramified local domain then by Theorem \ref{ThmE1}, $\epsilon(\mathcal J_\mfp)$ exists. 
\end{proof}	

\begin{Example}
Let $R=k[x,y]_{(x,y)}$ where $k$ is a field and $\mathcal I=\{I_n=(x)^{{\lceil{n\pi}\rceil}}\cap (x,y)^{\lceil{2n\pi}\rceil}\}$ be a non-Noetherian discrete valued filtration in $R$.	Note that $V(I_n)=\{P=(x), \mm=(x,y)\}$. Since $\mathcal I_\mfp=\{I_nR_P=(x)^{{\lceil{n\pi}\rceil}}R_P\}$, we have $\epsilon(\mathcal I_P)=\pi$. Now  $I_n^{sat}=(x)^{{\lceil{n\pi}\rceil}}$. Therefore \begin{eqnarray*}
\epsilon(\mathcal I)&=&2!\lim\limits_{n\to\infty}\lambda_R\Big((x)^{{\lceil{n\pi}\rceil}}/(x)^{{\lceil{n\pi}\rceil}}\cap (x,y)^{\lceil{2n\pi}\rceil}\Big)/n^2\\&=&2!\lim\limits_{n\to\infty}\frac{
	({\lceil{2n\pi}\rceil}-{\lceil{n\pi}\rceil})({\lceil{2n\pi}\rceil}-{\lceil{n\pi}\rceil}+1)}{2n^2}\\&=& \pi^2.
\end{eqnarray*} 
\end{Example}

	

Now we introduce a family of examples, which will illustrate the essential role  of the $A(c)$ condition in the existence of epsilon multiplicity as a limit.

Let $\tau:\ZZ_{>0}\rightarrow \ZZ_{>0}$ be any function such that $\tau(n+1)\ge \tau(n)$ for all $n$. We will restrict to $\tau$ satisfying this condition in this analysis. 

Then defining $I_n$ to be the ideal $I_n=(x^2,xy^{\tau(n)})$ in the  local ring $R=k[x,y]_{(x,y)}$ over a field $k$, 
we have that $\mathcal I_{\tau}=\{I_n\}$ is a filtration. We have that $I_n:m_R^{\infty}=(x)$ and 
$(I_n:m_R^{\infty})/I_n\cong R/(x,y^{\tau(n)})$ as an $R$-module. Thus $\lambda_R(I_n:m_R^{\infty}/I_n)=\tau(n)$.

Note that for any  $\tau$ and $f\in I_n$ for $n\geq 1$, we have $f^2\in\mm I_{2n}$. Therefore by \cite[Lemma 3.8]{CSas}, $\ell(\mathcal I_{\tau})=0$. We conclude that for all $\tau$,
the epsilon multiplicity is 
$$
\epsilon(\mathcal I_{\tau})=2\limsup 
\frac{\tau(n)}{n^2}
$$
while the analytic spread is
$$
\ell(\mathcal I_{\tau})=0.
$$
The following examples  show that we cannot expect the epsilon multiplicity of a filtration $\mathcal I$ to exist as a limit if the condition $A(c)$ is not satisfied for any $c$.

\begin{Example}\label{Example1}
Let $\sigma:\ZZ_{>0}\rightarrow \ZZ_{>0}$ be the function defined in (22) of Section 5 of \cite{C1}. We have that $\sigma(n+1)\ge \sigma(n)$ for all $n$, $1\le \sigma(n)\le \frac{n}{2}$ for all $n$ and $\lim_{n\rightarrow\infty}\frac{\sigma(n)}{n}$ does not exist, even when $n$ is constrained to lie in any arithmetic sequence. We further have that $\limsup \frac{\sigma(n)}{n}=\frac{1}{2}$. Let $\tau(n)=n\sigma(n)$. Then 
$$
\lim_{n\rightarrow \infty} \frac{\lambda_R(I_n:m_R^{\infty}/I_n)}{n^2}=\lim_{n\rightarrow\infty}\frac{\sigma(n)}{n}
$$
 does not exist. In this example, we have that $\epsilon(\mathcal I_{\tau})=\frac{1}{2}<\infty$, so that $\epsilon(\mathcal I_{\tau})$ is positive but $\ell(\mathcal I_{\tau})\ne \dim R$.
 \end{Example}

 \begin{Example}\label{Example2}
Let $\tau(n)$  be any increasing function  such  that $\limsup \frac{\tau(n)}{n^2}=\infty$ (such as $\tau(n)=n^3$). We obtain that $\epsilon(\mathcal I_{\tau})=\infty$. In particular, $\epsilon(\mathcal I_{\tau})$ is not finite.
 \end{Example}
 
 \begin{Example}\label{Example3} Suppose that $\mathcal I_{\tau}$ satisfies condition $A(c)$ for some $c$. Then $(I_n:m_R^{\infty})\cap m_R^{cn}=I_n\cap m_R^{cn}$ for all $n$ so that  $\tau(n)\le cn$ for all $n$. Thus $\frac{\lambda_R(I_n:m_R^{\infty}/I_n)}{n^2}=\frac{\tau(n)}{n^2}\le \frac{c}{n}$ for all $n$. Thus $\epsilon(\mathcal I_{\tau})=0$ and this  multiplicity exists as a limit, in agreement with the conclusions of Theorem \ref{ThmE1}. We further have that $\epsilon(\mathcal I_{\tau})=0$ and $\ell(\mathcal I_{\tau})\ne \dim R$.
\end{Example}

\section{Analytic spread and epsilon multiplicity}

In this section, we prove Theorem \ref{TheoremI4} from the introduction, which we restate here for the convenience of the reader.

\begin{Theorem} Let $R$ be a $d$-dimensional excellent normal local domain of equicharacteristic zero, or an arbitrary excellent local domain of dimension $\le 3$.  Let $\mathcal I$ be a 
$\QQ$-divisorial filtration of $R$. Then 
 $\epsilon(\mathcal I)>0$ if and only if 
the analytic spread $\ell(\mathcal I)=d$.
\end{Theorem}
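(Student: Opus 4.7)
The equivalence has two directions. The forward direction, $\epsilon(\mathcal I)>0 \Rightarrow \ell(\mathcal I)=d$, follows directly from Theorem \ref{Theorem2}: positivity of $\epsilon(\mathcal I)$ forces $H^0_{m_R}(R/I_n)\ne 0$ for arbitrarily large $n$, so $m_R\in\operatorname{Ass}(R/I_n)$ for some $n$, and condition (3) of Theorem \ref{Theorem2} yields $\ell(\mathcal I)=d$. This half uses neither normality nor resolution of singularities.

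For the reverse direction, I would fix a rational divisorial representation $I_n=\bigcap_{i=1}^r I(\nu_i)_{\lceil na_i\rceil}$ and group the $\nu_i$ by their centers $\mathfrak p_i=m_{\nu_i}\cap R$. By Theorem \ref{Theorem2}(2), $m_R\in\operatorname{Ass}(R/I_n)$ for $n\gg 0$, and the saturation analysis in Case 1 of the proof of Theorem \ref{ep1} forces at least one $\nu_i$ to be centered at $m_R$, since otherwise $I_n^{\mathrm{sat}}=I_n$, contradicting that $m_R$ is associated. Reindex so that $\nu_1,\ldots,\nu_s$ are centered at $m_R$ while $\nu_{s+1},\ldots,\nu_r$ are not, and set
\[
K_n=\bigcap_{i=1}^s I(\nu_i)_{\lceil na_i\rceil},\qquad J_n=\bigcap_{i=s+1}^r I(\nu_i)_{\lceil na_i\rceil},
\]
so that $I_n=J_n\cap K_n$. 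The Case 2 computation in the proof of Theorem \ref{ep1} (taking $\mfp=m_R$) then gives $I_n^{\mathrm{sat}}=J_n$, and hence
\[
\lambda_R(I_n^{\mathrm{sat}}/I_n)=\lambda_R\bigl((J_n+K_n)/K_n\bigr)=\lambda_R(R/K_n)-\lambda_R(R/(J_n+K_n)).
\]

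Since $\nu_i(m_R)\ge 1$ for each $i\le s$, we have $m_R^{Nn}\subseteq K_n$ for some fixed $N$, so $\mathcal K=\{K_n\}$ is an $m_R$-primary $\QQ$-divisorial filtration. By the volume theory for such filtrations on excellent local domains in the settings considered, the limit $e(\mathcal K):=\lim_n d!\,\lambda_R(R/K_n)/n^d$ exists and is strictly positive. The heart of the proof is then to show
\[
\limsup_n\lambda_R(R/(J_n+K_n))/n^d \,<\, e(\mathcal K)/d!.
\]
I would do this by passing to a proper birational morphism $\pi:X\to\operatorname{Spec}(R)$ on which every $\nu_i$ is realized as the order along a prime divisor $E_i$, available via resolution of singularities under the stated hypotheses, with $E_1,\ldots,E_s$ supported on the exceptional fiber $\pi^{-1}(m_R)$ and $E_{s+1},\ldots,E_r$ not. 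Writing $I_n=\Gamma(X,\mathcal O_X(-\lceil nD\rceil))$ with $D=\sum a_iE_i$, the quotient $(J_n+K_n)/K_n$ identifies cohomologically with sections supported on the exceptional fiber, whose asymptotic growth is a volume that is strictly positive precisely because some $a_i$ with $i\le s$ is positive.

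The main obstacle is making this volume computation rigorous for a $\QQ$-divisorial (not integral) filtration on a possibly singular excellent local domain, and extracting the strict inequality. This is the role of Cutkosky's asymptotic Riemann--Roch machinery for divisorial filtrations together with the existence results for limits of graded systems of ideals on resolutions of singularities: these identify the relevant limit as a positive volume on a suitable projective variety dominating $\pi^{-1}(m_R)$, and the strict inequality reduces to the strict positivity of the volume contribution coming from the $E_i$ with $i\le s$, which is automatic because at least one such $a_i$ is positive. Combining with $e(\mathcal K)>0$ yields $\limsup_n\lambda_R(I_n^{\mathrm{sat}}/I_n)/n^d>0$, i.e., $\epsilon(\mathcal I)>0$.
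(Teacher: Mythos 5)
The forward direction ($\epsilon(\mathcal I)>0\Rightarrow\ell(\mathcal I)=d$) is correct and is the same one-line argument as the paper's: nonvanishing of $H^0_{m_R}(R/I_n)$ gives $m_R\in\operatorname{Ass}(R/I_n)$, so Theorem \ref{Theorem2} applies. Your identification $I_n^{\mathrm{sat}}=J_n$ (the part of the intersection coming from valuations not centered at $m_R$) and the resulting formula $\lambda_R(I_n^{\mathrm{sat}}/I_n)=\lambda_R(R/K_n)-\lambda_R(R/(J_n+K_n))$ are also fine.

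The reverse direction has a genuine gap, and it sits exactly at the assertion that the strict inequality $\limsup_n\lambda_R(R/(J_n+K_n))/n^d<e(\mathcal K)/d!$ is ``automatic because at least one $a_i$ with $i\le s$ is positive.'' This is not automatic, and it is in fact the entire content of the theorem. Positivity of $a_1$ says nothing about whether the constraint ``$\nu_1\ge na_1$'' is asymptotically redundant: the remaining constraints defining $I_n$ could already force vanishing to order $\gtrsim na_1$ along $F_1$, in which case $J_n$ sits inside (or nearly inside) $K_n$ and the difference of lengths grows strictly slower than $n^d$. Ruling this out is precisely what the paper's $\gamma_F$-function (Nakayama's $\sigma$-function) is for: the hypothesis $m_R\in\operatorname{Ass}(R/I_{m'})$ gives $I(m'D)\ne I(m'D_1)$, which via \cite[Lemmas 4.1--4.3]{C2} forces $\gamma_F(-D)=0$ and $\gamma_F(-D_1)<a_1$, creating room ($t<a_1-\gamma_F(-D_1)$) in which a perturbation $D_t=D-tF$ admits an effective representative $U\sim -m_0D_t$ whose support misses $F$ (Lemma \ref{LemmaG1}). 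This is the step your proposal silently assumes and it cannot be replaced by ``$a_1>0$.''

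Even granting that, there is a second gap: you must still show the nontrivial contribution grows to order $n^d$, which is not a soft fact. The paper does not attempt to control the difference $\lambda_R(R/K_n)-\lambda_R(R/(J_n+K_n))$ of two quantities both of order $n^d$; instead it replaces the ``large'' perturbation $D\mapsto D_1$ (dropping the constraint at $F$ entirely, which is your $K_n$) by a small perturbation $D\mapsto D_s=D-sF$ for tiny rational $s$, giving an inclusion $H^0(X,\mathcal O_X(-nm_sD_s))/H^0(X,\mathcal O_X(-nm_sD))\subseteq I_{nm_s}^{\mathrm{sat}}/I_{nm_s}$. It then bounds this subquotient from below by restricting to the thickened exceptional divisor $nm_ssF$, invoking Fujita vanishing to replace $h^0$ by the Euler characteristic, and extracting the degree-$d$ coefficient from a Snapper polynomial computation, choosing $s$ small so that the leading coefficient $\sigma(s)$ is positive. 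None of this is a routine ``volume is positive'' statement, and it is where the $\QQ$-rationality of the $a_i$ is actually used (so that $D_s$ and $m_sD_s$ are integral after clearing denominators, making $\gamma_F$ behave well under $\QQ$-scaling as in (\ref{eq10}) and making the Snapper polynomial argument available). Your Example \ref{Example7} intuition is worth keeping in mind here: in the $\RR$-divisorial case the result fails precisely because this control is lost, so any proof in the $\QQ$-case must use rationality at a load-bearing point, which yours does not.
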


The following example shows that the conclusions of Theorem \ref{TheoremI2} and Theorem \ref{Theorem2} are false for filtrations.
We make use of 	\cite[Example 1.5]{C2} which shows that the conclusions of Theorem \ref{Theorem2} do not hold for $\RR$-divisorial filtrations. 

\begin{Example}\label{Example7} There exists an $\RR$-divisorial filtration $\mathcal I$ in $R=k[x]_{(x)}$ such that $\epsilon(\mathcal I)>0$ but $\ell(\mathcal I)=0<1=\dim R$.
\end{Example}

The example satisfies $A(4)$. This is the construction of the example. Let $\mathcal I=\{I_n\}$ where $I_n=(x^{\lceil n\pi\rceil})$ in $R=k[x]_{(x)}$.  For fixed $n$, $r\lceil n\pi\rceil>\lceil rn\pi\rceil+1$ for some $r\in \ZZ_{>0}$. Thus $f\in I_n$ implies $f^r\in m_RI_{nr}$ and so by \cite[Lemma 3.8]{CSas},
$$
\ell(\mathcal I)=\dim R[\mathcal I]/m_RR[\mathcal I]=0<1=\dim R.
$$
For all $n$, $I_n:m_R^{\infty}=R$ so 
$$
\epsilon(\mathcal I)=\lim_{n\rightarrow\infty}\frac{\lceil n\pi\rceil}{n}=\pi>0.
$$

Let $R$ be a normal excellent local ring. Let $\mathcal I=\{I_m\}$ where 
 $$
I_m=I(\nu_1)_{ma_1}\cap\cdots\cap I(\nu_s)_{ma_s}
$$ 
 for some divisorial valuations $\nu_1,\ldots,\nu_s$ of $R$ be an $\RR$-divisorial filtration of $R$, with $a_1,\ldots, a_s\in \RR_{>0}$. Then there exists a projective birational morphism $\phi:X\rightarrow \mbox{Spec}(R)$ such that there exist prime divisors $F_1,
 \ldots, F_s$ on $X$ such that $V_{\nu_i}=\mathcal O_{X,F_i}$ for  $1\le i\le s$ (\cite[Remark 6.6 to Lemma 6.5]{CS}). Let $D=a_1F_1+\cdots+a_sF_s$, an effective $\RR$-divisor on $X$ (an effective $\RR$-Weil divisor). Define $\lceil D\rceil=\lceil a_1\rceil F_1+\cdots+\lceil a_s\rceil F_s$, an integral divisor. 
 We have coherent sheaves $\mathcal O_X(-\lceil n D\rceil)$ on $X$ such that 
 \begin{equation}\label{N1}
 \Gamma(X,\mathcal O_X(-\lceil nD\rceil ))=I_n
 \end{equation}
  for $n\in \NN$. If $X$ is nonsingular then $\mathcal O_X(-\lceil nD\rceil)$ is invertible. The formula (\ref{N1}) is independent of choice of $X$. Further, even on a particular $X$, there are generally many different choices of effective $\RR$-divisors $G$ on $X$ such that $\Gamma(X,\mathcal O_X(-\lceil nG\rceil))=I_n$ for all $n\in \NN$. Any choice of a divisor $G$ on such an $X$ for which the formula $\Gamma(X,\mathcal O_X(-\lceil nG\rceil))=I_n$ for all $n\in \NN$ holds will be called a representation of the filtration  $\mathcal I$. 
 
 Given an  $\RR$-divisor $D=a_1F_1+\cdots+a_sF_s$ on $X$ we have a divisorial filtration
 $\mathcal I(D)=\{I(nD)\}$ where 
 $$
 I(nD)=\Gamma(X,\mathcal O_X(-\lceil nD\rceil ))=I(\nu_1)_{\lceil na_1\rceil}\cap\cdots\cap I(\nu_s)_{\lceil na_s\rceil} 
 =I(\nu_1)_{na_1}\cap\cdots\cap I(\nu_s)_{na_s}.
 $$ 
 We write $R[D]=R[\mathcal I(D)]$.

We recall the $\gamma_{\Gamma}$ function defined in \cite[Section 3]{C7}. We make use of  statements and proofs in  \cite[Section 4]{C2} which are based on corresponding statements and proofs for the $\sigma_{\Gamma}$ function on pseudo-effective divisors on a projective nonsingular variety in \cite[Chapter III, Section 1]{Nak}. 

Let $R$ be a normal excellent local ring and  $\pi:X\rightarrow \mbox{Spec}(R)$ be a birational projective morphism such that $X$ is nonsingular. 
Let $G=\sum a_iE_i$ be an effective $\RR$-divisor, and $\Gamma$ be a prime divisor on $X$. 
Let
$$
\mbox{ord}_{\Gamma}(G)=\left\{\begin{array}{ll}
a_i&\mbox{ if }\Gamma=E_i\\
0&\mbox{if }\Gamma\not\subset \mbox{Supp}(D).
\end{array}\right.
$$
For $D$ an $\RR$-divisor, let 
$$
\tau_{\Gamma}(D)=\inf\{\mbox{ord}_{\Gamma}(G)\mid G\ge 0\mbox{ and }G\sim D\},
$$
and define
$$
\gamma_{\Gamma}(D)=\inf\left\{\frac{\tau_{\Gamma}(mD)}{m}\mid m\in \ZZ_{>0}\right\}.
$$
Since $D$ is linearly equivalent to an effective divisor, there can be  only finitely many prime divisors $\Gamma$ such that $\gamma_{\Gamma}(D)>0$.

If $R$ has dimension 2 and $D$ is an integral divisor on $X$ then $\gamma_{\Gamma}(D)$ is a rational number, but there exist examples where $R$ has dimension 3 and integral divisors $D$ on $X$ such that $\gamma_{\Gamma}(D)$
 is an irrational number (\cite[Theorem 4.1]{C3}).

We have that 
$$
\gamma_{\Gamma}(D)=\lim_{m\rightarrow \infty}\frac{\tau_{\Gamma}(mD)}{m}.
$$
Thus if $\alpha\in \QQ_{>0}$, we have that
\begin{equation}\label{eq10}
\gamma_{\Gamma}(\alpha D)=\alpha\gamma_{\Gamma}(D).
\end{equation}
We also have that for $\RR$ divisors $D_1$ and $D_2$,
\begin{equation}\label{eq11}
\gamma_{\Gamma}(D_1+D_2)\le\gamma_{\Gamma}(D_1)+\gamma_{\Gamma}(D_2).
\end{equation}
If $m\in \ZZ_{>0}$ and $G\sim mD$ is an effective $\RR$-divisor, then $\mbox{ord}_{\Gamma}(G)\ge m\gamma_{\Gamma}(D)$.

\vskip .2truein

We now prove Theorem \ref{TheoremI4}. Let notation be as in the statement of Theorem \ref{TheoremI4}.

Suppose that $\epsilon(\mathcal I)>0$. Then $H^0_{m_R}(R/I_n)\ne 0$ for some $n$ which implies that $m_R\in \mbox{Ass}(R/I_n)$. By Theorem \ref{Theorem2}, $\ell(\mathcal I)=d$.

We will now show that $\ell(\mathcal I)=d$ implies $\epsilon(\mathcal I)>0$. The first part of the proof is similar to the first part of the proof of \cite[Theorem 1.4]{C2}.
Let $k=R/m_R$.
There exists by \cite[Lemma 3.1]{C2}, a projective birational morphism $\pi:X\rightarrow \mbox{Spec}(R)$ such that $X$ is nonsingular, all prime exceptional divisors of $\pi$ are nonsingular
 and there exists an effective $\QQ$-divisor $D$ on $X$ such that $\mathcal I=\mathcal I(D)$. Let 
 \begin{equation}\label{eq**}
 D=\sum a_iF_i
\end{equation}
with the $F_i$ distinct prime divisors and $a_i\in \QQ_{\ge 0}$.

By Theorem \ref{Theorem2}, $\ell(\mathcal I)=d$ implies there exists an $m'$ such that $m_R\in \mbox{Ass}(R/I_{m'})$. Thus 
there exists an $F_j$ which contracts to $m_R$ and $m'>0$ such that if $D_1=\sum_{i\ne j}a_iF_i$, then \begin{equation}\label{eq*}
I(m'D)\ne I(m'D_1).
\end{equation}
 Without loss of generality, we may assume that $j=1$ so that $F_j=F_1$. Set $F=F_1$. Since $F\subset \pi^{-1}(m_R)$, $F$ is a projective $k$-variety.
 We have that
 $-D\le -D_1=-D+a_1F$. Suppose that $\gamma_{F}(-D)>0$. Then $\gamma_{F}(-D_1)=\gamma_{F}(-D)+a_1$ by \cite[Lemma 4.3]{C2}. Thus 
$$
\Gamma(X,\mathcal O_X(\lfloor -nD\rfloor))=\Gamma(X,\mathcal O_X(\lfloor -nD_1\rfloor))
$$
 for all $n\in \NN$ by \cite[Lemma 4.1]{C2}, which is a contradiction to (\ref{eq*}). Thus $\gamma_{F}(-D)=0$ and $0\le \gamma_{F}(-D_1)<a_1$. 
 
Let $K$ be an ideal of $R$ such that $\pi:X\rightarrow \mbox{Spec}(R)$ is the blowup of $K$. Let $A$ be the Cartier divisor defined by $K\mathcal O_X=\mathcal O_X(A)$. Expand $A=\sum -b_iF_i$ where we increase the set of $F_i$ if neccessary, so that some $a_i$ and $b_j$ could be zero. $A$ is an anti-effective integral divisor which is ample and all irreducible components of $\pi^{-1}(m_R)$ are in the support of $A$. After possibly replacing $K$ with a power of $K$, so that $A$ is replaced with a multiple of $A$, we may assume that $A-F$ is also ample. We have that
$$
F=-\frac{1}{b_1}A-\sum_{i> 1}\frac{b_i}{b_1}F_i.
$$

We establish the following lemma. For $0<t\in \QQ$, let  $D_t=D-tF$. 

\begin{Lemma}\label{LemmaG1} Suppose that $t<a_1-\gamma_{F}(-D_1)$ is a positive rational number.  Then there exists $m_0>0$ and an effective integral divisor $U$ on $X$ which does not contain $F$ in it's support such that $m_0D_t$ is an integral divisor and
$-m_0D_t\sim U$.
\end{Lemma}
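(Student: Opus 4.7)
My plan is to produce $U$ in two conceptual stages: first build a near-equivalent divisor whose only defect is a negative $F$-coefficient, then cancel that defect by adding a principal divisor manufactured from the ampleness of $A-F$.

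For the first stage, I would use the hypothesis $\gamma_F(-D_1)<a_1-t$ and the infimum definition of $\gamma_F$ to pick $m_1\in\ZZ_{>0}$, with $m_1D_1$ integral, and an effective integral divisor $G\sim -m_1D_1$ with $c:=\operatorname{ord}_F(G)<m_1(a_1-t)$. Writing $G=cF+G'$ with $G'\geq 0$ and $F\not\subset\operatorname{Supp}(G')$, and setting $\delta:=m_1(a_1-t)-c>0$, a direct computation gives $-m_1D_t\sim -\delta F+G'$. For the second stage, since $A-F$ is ample and $\pi:X\to\Spec(R)$ is projective, $\mathcal O_X(n(A-F))$ is globally generated for $n\gg 0$, so a section non-vanishing at the generic point of $F$ cuts out an effective integral divisor $H\sim n(A-F)$ with $F\not\subset\operatorname{Supp}(H)$. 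Equivalently, $H+nF-nA$ is principal.

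To combine these, I would substitute the equality $-A=b_1F+\sum_{i>1}b_iF_i$ to rewrite, for any $k\in\ZZ_{\geq 0}$,
\[
k(H+nF-nA)=kH+kn(b_1+1)F+kn\sum_{i>1}b_iF_i,
\]
which is principal. Choose $M\in\ZZ_{>0}$, and set $m_0:=Mm_1$, divisible enough that $k:=M\delta/(n(b_1+1))$ is a positive integer and $m_0D_t$ is integral; this is a finite divisibility condition that $M$ can be taken to satisfy. Adding $M$ times the relation $-m_1D_t\sim -\delta F+G'$ to the principal divisor $k(H+nF-nA)$, the $F$-contributions $-M\delta F$ and $kn(b_1+1)F$ cancel by construction, yielding
\[
-m_0D_t\sim MG'+kH+kn\sum_{i>1}b_iF_i=:U,
\]
a sum of three effective integral divisors whose supports all avoid $F$.

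The hard part is the cancellation: ampleness of $A-F$ does not directly supply an effective divisor linearly equivalent to $F$, so I cannot simply ``replace $-\delta F$ by something positive and $F$-free''. The trick is that the explicit decomposition $-A=b_1F+\sum_{i>1}b_iF_i$ splits the $-nA$ contribution into an $F$-term with fixed positive integer coefficient $b_1$ and a remainder that avoids $F$; this is exactly why the arithmetic $kn(b_1+1)=M\delta$ can clear the $F$-coefficient while all remaining contributions stay effective. The hypothesis $t<a_1-\gamma_F(-D_1)$ enters precisely to make $\delta>0$, and the ampleness of $A-F$ enters precisely through the existence of $H$.
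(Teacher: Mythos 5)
Your proof is correct, and it takes a genuinely different route from the paper's. The paper's argument first introduces a rational $\lambda>0$ with $\lambda+t<a_1-\gamma_F(-D_1)$, uses the algebraic identities of \cite[Lemmas 4.2, 4.3]{C2} to deduce $\gamma_F(-D_{\lambda+t})=0$ by contradiction, rewrites $-D_t = -D_{\lambda+t}+\frac{\lambda}{b_1}A+\sum_{i>1}\frac{\lambda b_i}{b_1}F_i$, and then invokes \cite[Lemma 4.4]{C2} as a black box to produce $U$. You avoid the intermediate normalization to $\gamma_F=0$ altogether: you extract an explicit effective integral representative $G$ of $-m_1D_1$ directly from the infimum definition, reduce to the single defect $-\delta F$ with $\delta>0$, and then cancel it with a principal divisor manufactured from the ampleness of $A-F$ (which the paper arranges just before the lemma but does not actually use inside its proof). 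Both arguments hinge on the same arithmetic identity $-A=b_1F+\sum_{i>1}b_iF_i$, which is what lets an ample divisor trade off against the $F$-coefficient while keeping everything else effective and $F$-free. Your version is more self-contained and arguably more transparent, at the cost of a little explicit bookkeeping.

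Two small points you should make explicit. First, choose $m_1$ so that $m_1(a_1-t)$ is an integer as well as $m_1D_1$ being integral; then $\delta=m_1(a_1-t)-c$ is a positive integer and $m_1D_t$ is already integral, which cleans up the divisibility requirements on $M$. (Your catch-all "finite divisibility condition" does absorb this if $\delta$ is merely rational, but it's cleaner to fix it at the start.) Second, your argument tacitly uses that when $m_1D_1$ is integral, the infimum in $\tau_F(-m_1D_1)$ is approached through effective \emph{integral} divisors $G\sim -m_1D_1$; this is consistent with the paper's intended notion of linear equivalence in the definition of $\tau_\Gamma$, and it is what guarantees $G'$, and hence $U$, is integral. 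Neither point affects the validity of your construction.
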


\begin{proof} Let $\lambda\in \QQ_{>0}$ be such that $\lambda+t<a_1-\gamma_{F}(-D_1)$. 
Then $D_t=D_{\lambda+t}+\lambda F$.   Suppose that $\gamma_{F}(-D_t)>0$. Then 
by \cite[Lemma 4.2]{C2} \ and  \cite[Lemma 4.3]{C2},
$$
\begin{array}{lll}
0=\gamma_{F}(-D_1-\gamma_{F}(-D_1)F)&=&\gamma_{F}(-D_t+(a_1-\gamma_{F}(-D_1)-t)F)\\
&=&\gamma_{F}(-D_t)+(a_1-\gamma_{F}(-D_1)-t)>0,
\end{array}
$$
 a contradiction. Thus $\gamma_{F}(-D_t)=0$.

 We have that
$$
-D_t=-D_{\lambda+t}-\lambda F=-D_{\lambda+t}+\frac{\lambda}{b_1}A+\sum_{i> 1}\frac{\lambda b_i}{b_1}F_i.
$$
Now the lemma follows from Lemma 4.4 \cite{C2}, since $\gamma_{F}(-D_{\lambda+t})=0$.
\end{proof}

For the rest of the proof, we fix $t\in \QQ_{>0}$ with $t<a_1-\gamma_{F}(-D_1)$ and allow $s\in \QQ_{>0}$ with $0<s<t$ to vary. 

We have that $-D_s=-D_t+(s-t)F$. By Lemma \ref{LemmaG1}, there exists $m_0\in \ZZ_{>0}$ and an effective integral divisor $U$ whose support does not contain $F$ such that $m_0(-D_t)\sim U$.  Thus
$$
m_0(-D_s)\sim U+m_0(s-t)F\sim U+m_0(t-s)(\sum_{i>1}\frac{b_i}{b_1}F_i)+\frac{m_0(t-s)}{b_1}A.
$$
Since $A$ and $A-F$ are ample, we have that $A-\lambda F$ is ample for $0\le \lambda\le 1$. 

We now impose the further restriction on $s$ that $s\le\frac{t}{2b_1+1}$. This implies that $\frac{sb_1}{t-s}\le \frac{1}{2}$. With this restriction on $s$, we have that
$\frac{t-s}{b_1}A-\lambda F$ is ample for $0\le\lambda\le s$. In fact,
\begin{equation}\label{eqR4}
A-\frac{b_1\lambda}{t-s}F=\frac{1}{2}A+(\frac{1}{2}A-\frac{b_1\lambda}{t-s}F)
=\frac{1}{2}A+\frac{1}{2}(A-\frac{2b_1\lambda}{t-s}F)
\end{equation}
which is the sum of two ample divisors since $\frac{2b_1\lambda}{t-s}\le 1$. Let $H_s=\frac{t-s}{b_1}A$.

There exists $m_1\in \ZZ_{>0}$ such that $m_1\left(\frac{t-s}{b_1}\right)\in \ZZ_{>0}$, $m_1s\in \ZZ_{>0}$ and $m_1D$ is an integral divisor. Let $m_s=m_0m_1$.

 There exists a section 
$$
\tau_s\in \Gamma(X,\mathcal O_X(-m_sD_s-m_sH_s))
$$
 such that the divisor $(\tau_s)$ of $\tau_s$ is $m_1U+m_s\left(\frac{t-s}{b_1}\right)(\sum_{i>1}b_iF_i)$. Since $F$ is not in the support of $(\tau_s)$, $\tau_s$ restricts to a nonzero section $\overline\tau_s$ of $\Gamma(F,\mathcal O_X(-m_sD_s-m_sH_s)\otimes\mathcal O_{nm_ssF})$, inducing  commutative diagrams with exact columns and   rows for $n\in \ZZ_{\ge 0}$
 
\begin{equation}\label{eqR2}
\begin{array}{ccccc}
&&0&&0\\
&&\uparrow&&\uparrow\\
0&\rightarrow &\mathcal O_X(nm_sH_s)\otimes\mathcal O_{nm_ssF}&\stackrel{\overline\tau_s^n}{\rightarrow}&
\mathcal O_X(-nm_sD+nm_ssF)\otimes \mathcal O_{nm_ssF}\\
&&\uparrow&&\uparrow\\
0&\rightarrow&\mathcal O_X(nm_sH_s)&\stackrel{\tau_s^n}{\rightarrow}& \mathcal O_X(-nm_sD+nm_ssF)\\
&&\uparrow&&\uparrow\\
0&\rightarrow &\mathcal O_X(nm_sH_s-nm_ssF)&\rightarrow&\mathcal O_X(-nm_sD)\\
&&\uparrow&&\uparrow\\
&&0&&0
\end{array}
\end{equation}


and taking cohomology, we have commutative diagrams with exact columns and rows
$$
\begin{array}{ccccc}
0&\rightarrow &H^0(X,\mathcal O_X(nm_sH_s)\otimes\mathcal O_{nm_ssF})&\stackrel{\overline\tau_s^n}{\rightarrow}&
H^0(X,\mathcal O_X(-nm_sD+nm_ssF)\otimes \mathcal O_{nm_ssF})\\
&&\beta_n\uparrow&&\alpha_n\uparrow\\
0&\rightarrow&H^0(X,\mathcal O_X(nm_sH_s))&\stackrel{\tau_s^n}{\rightarrow}& H^0(X,\mathcal O_X(-nm_sD+nm_ssF))\\
&&\uparrow&&\uparrow\\
0&\rightarrow &H^0(X,\mathcal O_X(nm_sH_s-nm_ssF))&\rightarrow&H^0(X,\mathcal O_X(-nm_sD))\\
&&\uparrow&&\uparrow\\
&&0&&0
\end{array}
$$

We have that 
$$
\alpha_n(H^0(X,\mathcal O_X(-nm_sD+nm_ssF))\cong H^0(X,\mathcal O_X(-nm_sD_s)
/H^0(X,\mathcal O_X(-nm_sD)).
$$
 We will  show that $\lambda_R( \alpha_n(H^0(X,\mathcal O_X(-nm_sD+nm_ssF)))$ grows like $n^d$.
To do this, it suffices to show that $\lambda_R(\beta_n(H^0(X,\mathcal O_X(nm_sH_s)))$ grows like $n^d$, since $\overline\tau_s^n$ is an injection and 
$\overline\tau_s^n\beta_n(H^0(X,\mathcal O_X(nm_sH_s))\subset \alpha_n(H^0(X,\mathcal O_X(-nm_sD+nm_ssF)))$.

Now $\beta_n$ is surjective for large $n$ since $H_s-sF$ is ample, so that
$H^1(X,\mathcal O_X(nm_sH_s-nm_ssF))=0$ for $n\gg0$. Thus it suffices to show that 
$\lambda_R(H^0(X,O_X(nm_sH_s)\otimes_{\mathcal O_{nm_ssF}}))$ grows of order $n^d$ for $n\gg 0$.

We have short exact sequences
\begin{equation}\label{eqR6}
0\rightarrow \mathcal O_X(nm_sH_s-jF)\otimes\mathcal O_F\rightarrow
 \mathcal O_X(nm_sH_s)\otimes\mathcal O_{(j+1)F}\rightarrow
 \mathcal O_X(nm_sH_s)\otimes\mathcal O_{jF}\rightarrow 0
 \end{equation}
 for $1\le j\le nm_ss-1$.

 For $0\le j\le nm_ss$,
$$
nm_sH_s-j F=\frac{nm_s(t-s)}{b_1}(\frac{1}{2}A +\frac{1}{2}(A-\frac{2b_1}{(t-s)}\frac{j}{nm_s}F))
$$
 is the sum of two ample divisors as shown in (\ref{eqR4}). This decomposition continues to hold when restricted to $F$. Thus by Fujita's vanishing theorem (\cite{Fuj} over all fields, also Theorem 1.4.35 page 66 \cite{La1} in characteristic zero) we have that there exists $n_0$ such that $n\ge n_0$ implies that $H^i(F,\mathcal O_X(nm_sH_s-j F)\otimes\mathcal O_F)=0$ for $i>0$ and $0\le j\le nm_ss$.

Let $h^i(\mathcal F)=\dim_kH^i(F,\mathcal F)=\lambda_R(H^i(F,\mathcal F))$ and $\chi(\mathcal F)=h^0(\mathcal F)-h^1(\mathcal F)$ if $\mathcal F$ is a coherent sheaf on $F$.

Restricting to $n\ge n_0$ and 
taking cohomology of the exact sequences (\ref{eqR6}), we obtain that
$$
\begin{array}{lll}
\lambda_R(H^0(X,\mathcal O_X(nm_sH_s)\otimes\mathcal O_{nm_ssF}))&=&
\sum_{j=0}^{nm_ss-1}h^0(\mathcal O_X(nm_sH_s-jF)\otimes\mathcal O_F)\\
& =&\sum_{j=0}^{nm_ss-1}\chi(\mathcal O_X(nm_sH_s-jF)\otimes\mathcal O_F).
\end{array}
$$
Now 
$\chi(\mathcal O_X(nm_sH_s-jF)\otimes\mathcal O_F)$ is  a polynomial in $n$ and $j$ of total degree $d-1=\dim F$ (the Snapper polynomial \cite{S}, \cite{K}, \cite{AG}).  The bi-homogeneous part of 
$\chi(\mathcal O_X(nm_sH_s-jF)\otimes\mathcal O_F)$ of total degree $d-1$ is 
$\frac{((nm_sH_s-jF)^{d-1}\cdot F)}{(d-1)!}$ (by a variation of Theorem 19.16 \cite{AG}). We can do these calculations after making a base change by an algebraic closure of $k$ since the intersection theory from the Snapper polynomial is valid over an arbitrary (not necessarily reduced) projective scheme.

Thus for large $n$,
$$
\lambda_R(H^0(X,\mathcal O_X(nm_sH_s)\otimes\mathcal O_{nm_sF}))
$$
is a polynomial $P_s(n)$ in $n$ of degree $\le d$, and 
$$
P_s(n)=\sum_{j=0}^{nm_ss-1} \frac{((nm_sH_s-jF)^{d-1}\cdot F)}{(d-1)!}
+\mbox{ terms in $n$ of degree $<d$.}
$$
Define 
$$
Q_s(n)=\sum_{j=0}^{ns-1}\frac{\left((nH_s-jF)^{d-1}\cdot F\right)}{(d-1)!}.
$$
$Q_s(n)$ is a polynomial in $n$ of degree $\le d$. We have that
$$
P_s(n)=Q_s(m_sn)+\mbox{ terms of degree $<d$}.
$$

\begin{equation}\label{eqR7}
\begin{array}{l}
(d-1)!Q_s(n)=\sum_{j=0}^{ns-1} ((nH_s-jF)^{d-1}\cdot F)\\
=((nH_s)^{d-1}\cdot F)+\sum_{j=1}^{ns-1}\left[\sum_{k=0}^{d-1}\binom{d-1}{k}(-1)^{d-k-1}(H_s^k\cdot F^{d-k})n^kj^{d-1-k}\right]\\
=((nH_s)^{d-1}\cdot F)+\sum_{k=0}^{d-1}\left[\binom{d-1}{k}(-1)^{d-k-1}(H_s^k\cdot F^{d-k})n^k\left(\sum_{j=1}^{ns-1}j^{d-1-k}\right)\right].
\end{array}
\end{equation}

By Faulhaber's formula, $\sum_{k=1}^nk^p$ is a polynomial in $n$ with leading term
$\frac{n^{p+1}}{p+1}$ (c.f. \cite{Bear}). Thus $\sum_{j=1}^{ns-1}j^{d-1-k}$ is a polynomial in 
$n$ of degree $d-k$, whose leading term is $\frac{s^{d-k}}{d-k}n^{d-k}$. Substituting into (\ref{eqR7}), we see that the coefficient $\sigma(s)$ of the  term of degree $d$ of the polynomial 
$Q_s(n)$ is
$$
\sigma(s)=\frac{1}{(d-1)!}\sum_{k=0}^{d-1}(-1)^{d-k-1}\binom{d-1}{k}\frac{s^{d-k}}{d-k}\left(\frac{t-s}{b_1}\right)^k(A^k\cdot F^{d-k})
$$
which is a polynomial in $s$. We have that
$$
\sigma(s)=s\frac{1}{(d-1)!}\left(\frac{t}{b_1}\right)^{d-1}(A^{d-1}\cdot F)+\mbox{ higher degree terms in $s$}.
$$
In particular, $\sigma(s)$ is a nonzero polynomial, and since $\frac{1}{(d-1)!}\left(\frac{t}{b_1}\right)^{d-1}(A^{d-1}\cdot F)>0$ (as $A$ is ample) if $s$ is sufficiently small within the region 
  $0<s\le\frac{t}{2b_1+1}$, we have that $\sigma(s)>0$. We now fix such an $s$. Since $Q_s(n)$ is a polynomial in $n$ of degree $d$, $P_s(n)$ is a polynomial in $n$ of degree $d$.

We have natural inclusions 
$$
H^0(X,\mathcal O(-nD_s))/H^0(X,\mathcal O_X(-nD))\subset 
H^0(X,\mathcal O_X(-n(\sum_{\pi(F_i)\ne m_R}a_iF_i)))
/H^0(X,\mathcal O_X(-nD)).
$$
 
Since $P_s(n)$ is a polynomial in $n$ of degree $d$ (with positive leading coefficient), there exists a constant $c>0$ such that for $n\gg 0$, we have that
$$
\begin{array}{lll}
0<c&\le& \frac{P_s(n)}{m_s^dn^d}=\frac{\lambda_R(H^0(nm_sF,\mathcal O_X(nm_sH_s)\otimes\mathcal O_{nm_sF}))}{m_s^dn^d}\\
&\le& \frac{\lambda_R (\alpha_n H^0(X,\mathcal O_X(-nm_sD+nm_sF)))}{m_s^dn^d}\\
&=& \frac{\lambda_R(H^0(X,\mathcal O_X(-nm_sD_s))/H^0(X,\mathcal O_X(-nm_sD))}{m_s^dn^d}\\
&\le &\frac{\lambda_R(H^0(X,\mathcal O_X(-nm_s(\sum_{\pi(F_i)\ne m_R}a_iF_i)))/
H^0(X,\mathcal O_X(-nm_sD))}{m_s^dn^d}.
\end{array}
$$
Thus 
$$
\begin{array}{lll}
\epsilon(\mathcal I)&=&d! \lim_{n\rightarrow \infty}\frac{\lambda_R(I(nD):m_R^{\infty}/I(nD))}{n^d}\\
&=& d!\lim_{n\rightarrow \infty}\frac{\lambda_R((H^0(X,\mathcal O_X(-n(\sum_{\pi(F_i)\ne m_R}a_iF_i)))/
H^0(X,\mathcal O_X(-nD))}{n^d}\\
&=& d!\lim_{n\rightarrow \infty}\frac{\lambda_R((H^0(X,\mathcal O_X(-nm_s(\sum_{\pi(F_i)\ne m_R}a_iF_i)))/
H^0(X,\mathcal O_X(-nm_sD))}{m_s^dn^d}\\
&\ge&d!c>0.
\end{array}
$$

\section{Epsilon multiplicity under inclusions of filtrations}
In this section we consider filtrations $\mathcal I,\mathcal J$ such that $\mathcal J\subset \mathcal I$ and discuss the existence of their epsilon multiplicities.
\begin{Proposition}\label{finite length}
Let $(R,\mm)$ be a Noetherian local ring of dimension $d>0$ and $\mathcal J=\{J_n\}, \mathcal I=\{I_n\}$ be filtrations of $R$ such that $J_n\subset I_n$ and $\lambda_R(I_n/J_n)<\infty$ for all $n\geq 1$. Suppose $\mathcal J$ satisfies $A(c)$ for some $c\in\ZZ_>0$.  Then the following hold.
\begin{enumerate}
\item [$(i)$] $\mathcal I$ satisfies $A(c)$ and $\epsilon(\mathcal I), \epsilon(\mathcal J)$ exist as limits.
\item [$(ii)$]  Suppose $R$ is analytically irreducible. If  $R[\mathcal I]$ is integral over $R[\mathcal J]$ then $\epsilon(\mathcal I)=\epsilon(\mathcal J)$.
\item [$(iii)$] The converse of $(ii)$ is not true in general.	
\end{enumerate}	
\begin{proof}
Since $\lambda_R(I_n/J_n)<\infty$, we have $J_n\subset I_n\subset J_n^{sat}$ for all $n\geq 1$. Hence 
$$J_n\cap \mm^{cn}=I_n\cap \mm^{cn}=J_n^{sat}\cap \mm^{cn}$$ for all $n\geq 1$.

$(i)$  Let $x\in I_n^{sat}\cap \mm^{cn}$ for any $n\geq 1$. Then $\mm^lx\in I_n\cap \mm^{cn}=J_n\cap \mm^{cn}\subset J_n$ for some $l\in\ZZ_{>0}$. Hence $x\in J_n^{sat}\cap \mm^{cn}=I_n\cap \mm^{cn}$. Therefore $\mathcal I$ satisfies $A(c)$ and by Theorem \ref{ThmE1}, $\epsilon(\mathcal I)$ and $\epsilon(\mathcal J)$ exist as limits and they are real numbers.

$(ii)$ By \cite[Theroem 6.1]{C1}, $\lim\limits_{n\to\infty}\lambda_R(I_n/J_n)/n^d$ exists. Now from  the following  short exact sequence 
$$0\longrightarrow I_n/J_n\longrightarrow R/J_n\longrightarrow R/I_n\longrightarrow 0$$ of $R$-modules, we get a short exact sequence 
$$0\longrightarrow H_{\mm }^0(I_n/J_n)=I_n/J_n\longrightarrow H _{\mm }^0(R/J_n)\longrightarrow H_{\mm}^0(R/I_n)\longrightarrow 0$$
 of local cohomology modules. Therefore 
 \begin{equation}\label{E}\epsilon(\mathcal J)=\epsilon(\mathcal I)+d!\lim\limits_{n\to \infty}\frac{\lambda_R(I_n/J_n)}{n^d}.\end{equation}

 If $R[\mathcal I]$ is integral over $R[\mathcal J]$ then by \cite[Theorem 1.5]{PS}, we have $\lim\limits_{n\to\infty}\lambda_R(I_n/J_n)/n^d=0.$ Therefore by equation (\ref{E}), we have $\epsilon(\mathcal I)=\epsilon(\mathcal J)$.

$(iii)$ See example \ref{Example6}. In this example, the filtrations $\mathcal I$ and $\mathcal J$ both satisfy $A(2)$, $\epsilon(\mathcal I)=\epsilon(\mathcal J)$ but $R[\mathcal I]$ is not  integral over $R[\mathcal J]$.
	\end{proof}	
\end{Proposition}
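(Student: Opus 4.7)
For part (i), the key observation is that $\lambda_R(I_n/J_n)<\infty$ forces $I_n\subset J_n^{\mathrm{sat}}$, so we obtain the sandwich
$$J_n\subset I_n\subset J_n^{\mathrm{sat}}.$$
Intersecting with $\mm^{cn}$ and invoking $A(c)$ for $\mathcal J$ collapses the sandwich to the chain of equalities $J_n\cap\mm^{cn}=I_n\cap\mm^{cn}=J_n^{\mathrm{sat}}\cap\mm^{cn}$. To verify $A(c)$ for $\mathcal I$, I take $x\in I_n^{\mathrm{sat}}\cap\mm^{cn}$ and clear $\mm$-torsion: for some $\ell$, $\mm^{\ell}x\subset I_n\cap\mm^{cn}=J_n\cap\mm^{cn}\subset J_n$, so $x\in J_n^{\mathrm{sat}}\cap\mm^{cn}=I_n\cap\mm^{cn}$, which is exactly the required equality. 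The existence of both $\epsilon(\mathcal I)$ and $\epsilon(\mathcal J)$ as finite limits is then immediate from Theorem \ref{ThmE1}.

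For part (ii), my plan is to reduce everything to a single auxiliary limit. Starting from the short exact sequence of $R$-modules
$$0\to I_n/J_n\to R/J_n\to R/I_n\to 0,$$
the hypothesis $\lambda_R(I_n/J_n)<\infty$ makes $I_n/J_n$ an $\mm$-torsion module with no higher local cohomology, so applying $\Gamma_\mm$ produces the short exact sequence
$$0\to I_n/J_n\to H^0_\mm(R/J_n)\to H^0_\mm(R/I_n)\to 0.$$
Taking lengths, dividing by $n^d$, multiplying by $d!$, and letting $n\to\infty$ yields
$$\epsilon(\mathcal J)=\epsilon(\mathcal I)+d!\lim_{n\to\infty}\frac{\lambda_R(I_n/J_n)}{n^d},$$
where the final limit exists by \cite[Theorem 6.1]{C1} applied to the sandwich $\mathcal J\subset\mathcal I\subset\mathcal J^{\mathrm{sat}}$. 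The analytic irreducibility of $R$ combined with the integrality of $R[\mathcal I]$ over $R[\mathcal J]$ then forces this limit to vanish by \cite[Theorem 1.5]{PS}, giving $\epsilon(\mathcal I)=\epsilon(\mathcal J)$.

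Part (iii) is settled simply by pointing to Example \ref{Example6}, which will be constructed later in the section: in that example both filtrations satisfy $A(2)$ with equal epsilon multiplicities but $R[\mathcal I]$ is not integral over $R[\mathcal J]$.

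The main obstacle is really the passage in (ii) from the short exact sequence of modules to a short exact sequence of $\mm$-local cohomology modules, which hinges on the fact that $I_n/J_n$ has finite length and therefore contributes nothing beyond degree zero; once this is clean, the identity relating $\epsilon(\mathcal I)$, $\epsilon(\mathcal J)$, and the normalized colength limit organizes the entire proposition: (ii) becomes an application of \cite[Theorem 1.5]{PS}, while (iii) amounts to exhibiting filtrations where the colength limit vanishes without integrality of the Rees algebras.
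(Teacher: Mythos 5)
Your proposal is correct and follows essentially the same route as the paper's own proof: the same sandwich $J_n\subset I_n\subset J_n^{\mathrm{sat}}$ and torsion-clearing argument for (i), the same short exact sequence of local cohomology modules and length identity $\epsilon(\mathcal J)=\epsilon(\mathcal I)+d!\lim\lambda_R(I_n/J_n)/n^d$ for (ii), the same appeal to \cite[Theorem 6.1]{C1} and \cite[Theorem 1.5]{PS}, and the same example for (iii). Your added remark that $I_n/J_n$ being $\mm$-torsion kills the higher local cohomology is the implicit justification the paper leaves to the reader, so the two arguments agree in substance.
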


Using Theorem \ref{ep} and Proposition \ref{finite length}, we get the following.
\begin{Corollary}
	Let $R$ be a Noetherian local domain of dimension $d>0$ and $\mathcal J=\{J_n\}$ be a discrete valued filtration. Suppose $ \mathcal I=\{I_n\}$ is a filtration such that $J_n\subset I_n$ and $\lambda_R(I_n/J_n)<\infty$ for all $n\geq 1$. Then $\mathcal I$ satisfies $A(c)$ for some $c\in\ZZ_{>0}$ and $\epsilon(\mathcal I)$ exists as a limit.
\end{Corollary}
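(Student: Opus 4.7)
The plan is a direct two-step combination of the cited results, exactly as the opening phrase of the corollary advertises. First, I would apply Theorem \ref{ep} to the discrete valued filtration $\mathcal J$ with the prime taken to be $\mfp = m_R$. Since $R$ is already local, one has $R_{m_R} = R$ and $\mathcal J_{m_R} = \mathcal J$, so the first (unconditional) conclusion of Theorem \ref{ep} yields a constant $c \in \ZZ_{>0}$ such that $\mathcal J$ itself satisfies the property $A(c)$.

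With $\mathcal J$ now known to satisfy $A(c)$, and with the hypotheses $\mathcal J \subset \mathcal I$ and $\lambda_R(I_n/J_n) < \infty$ for every $n \geq 1$ in place, all the assumptions of Proposition \ref{finite length} are in force. Invoking part (i) of that proposition then delivers both required conclusions in a single stroke: $\mathcal I$ satisfies $A(c)$ for the same constant $c$, and $\epsilon(\mathcal I)$ exists as a limit.

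There is no genuine obstacle here, since all the substantive work --- the preservation of $A(c)$ under localization in Theorem \ref{ep}, and its transfer to the overfiltration $\mathcal I$ via the finite colength hypothesis in Proposition \ref{finite length} --- has already been carried out. The only minor checkpoint worth noting explicitly is that specializing to $\mfp = m_R$ in Theorem \ref{ep} truly recovers $A(c)$ for $\mathcal J$ itself rather than for a proper localization, which is immediate because $R$ is local.
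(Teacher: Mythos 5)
Your proof is correct and is exactly the argument the paper intends: specialize Theorem \ref{ep} at $\mfp = m_R$ (noting $R_{m_R}=R$, $\mathcal J_{m_R}=\mathcal J$) to obtain $A(c)$ for $\mathcal J$, then invoke Proposition \ref{finite length}(i) with the finite-colength hypothesis to transfer $A(c)$ to $\mathcal I$ and conclude the existence of $\epsilon(\mathcal I)$ as a limit. The paper gives no further detail, so there is nothing to compare beyond this.
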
	
\begin{Remark}
In Proposition \ref{finite length},	if we replace ``$\mathcal J$ satisfies $A(c)$" by ``$\mathcal I$ satisfies $A(c)$" then $\epsilon(\mathcal I)$ exists as a limit but $\epsilon (\mathcal J)$ may not exist. For example, let $\mathcal I=\{I_n=(x^2,xy^n)\}$ and $\mathcal J=\{J_n=(x^2,xy^{n^3})\}$ be filtrations in $k[x,y]_{(x,y)}$. Then $\mathcal I$ satisfies $A(2)$, $\epsilon(\mathcal I)=0$ and $\epsilon (\mathcal J)=\infty$.
	\end{Remark}
	
\begin{Remark}
	If we drop the condition that  $\mathcal J$ satisfies $A(c)$ for some $c\in\ZZ_>0$ in Proposition \ref{finite length}, then $(ii)$ of Proposition \ref{finite length} is not true in general (see Example \ref{Example 0}). 
\end{Remark}

The next example shows that the property of $A(c)$ does not  descend in integral extensions of filtrations. 

\begin{Example}\label{Example4}  There exists a filtration $\mathcal J$ which satisfies $A(2)$ with the following properties
\begin{enumerate}
\item[1)] Given $c\in \ZZ_{\ge 2}$, there exists a subfiltration $\mathcal K$ of $\mathcal J$ such that
$\overline{R[\mathcal K]}=\overline{R[\mathcal J]}$ and $c$ is the smallest positive integer such that $\mathcal K$ satisfies $A(c)$.
\item[2] There exists a subfiltration $\mathcal H$ of $\mathcal J$ such that $\overline{R[\mathcal H]}=\overline{R[\mathcal J]}$ but $\mathcal H$ does not satisfy $A(c)$ for any $c\in \ZZ_{>0}$.
\end{enumerate}
\end{Example}

We have that $\epsilon(\mathcal J)=\epsilon(\mathcal K)=0$ but $\epsilon(\mathcal H)=\frac{1}{2}$. Recall that we have defined the epsilon multiplicity of a filtration as a limsup in (\ref{epdef}).
We now construct the example.
Let $\mathcal J=\{J_n\}$ where $J_n$ is the ideal $(x^2,xy^n)$ in $R=k[x,y]_{(x,y)}$.
Let $\tau$ be any increasing function such that $\tau(n)\ge n$ for all $n$ and let $\mathcal I=\mathcal I_{\tau}=\{I_n\}$, where $I_n=(x^2,xy^{\tau(n)})$ as defined in Section \ref{SecEF}. Then $\mathcal I_{\tau}\subset \mathcal J$.

We will show that $\overline{R[\mathcal I_{\tau}]}=\overline{R[\mathcal J]}$. To establish this, we need only show that $xy^nt^n$ is integral over $R[I_{\tau}]=\sum_{n\ge 0}I_nt^n$. This follows since 
$$
(xy^nt^n)^2=x^2y^{2n}t^{2n}\in I_{2n}t^{2n}.
$$

Let $\mathcal H$ be the filtration of Example \ref{Example1}. In Example \ref{Example1} we showed that 
$\mathcal H$ does not satisfy $A(c)$ for any $c$, so that the second statement of Example \ref{Example4} holds.

Let $a\in \ZZ_{>0}$ and let $\mathcal K=\{K_n\}$ where $K_n= (x^2, xy^{an})$.

 We will establish that $\mathcal K$ satisfies $A(c)$ if and only if $c>a$.

We have that $K_n:m_R^{\infty} =(x)$ for all $a$ and $n$. Thus for $c\in \ZZ_{>0}$,
$$
(K_n:m_R^{\infty})\cap m_R^{cn}=xm_R^{cn-1}
$$
and
$$
K_n\cap m_R^{cn}=\left\{
\begin{array}{ll}
m_R^{cn-2}x^2+xy^{an}m_R^{cn-an-1}=xm_R^{cn-1}&\mbox{ if $cn\ge an+1$}\\
m_R^{cn-2}x^2&\mbox{ if }cn<an+1.
\end{array}\right.
$$
Thus $\mathcal K$ satisfies $A(c)$ if and only if $c>a$. Taking $a=c-1$, we obtain the conclusions of the first statement of    Example \ref{Example4}.

We also have that the property of $A(c)$ does not ascend under inclusions of filtrations. 

\begin{Example}\label{Example5} There exists an inclusion of filtrations  $\mathcal I\subset \mathcal J$ such that $\mathcal I$ satisfies $A(1)$ but $\mathcal J$ does not satisfy $A(1)$.
\end{Example}

The example is constructed as follows. Let $R=k[x,y]_{(x,y)}$. Let $\mathcal I=\{I_n\}$ where $I_n=(x^{3n})$ and $\mathcal J=\{J_n\}$ where $J_n=(x^nm_R^{2n})$. We have that $\mathcal I$ satisfies $A(1)$.
For all $n$, $J_n:m_R^{\infty}=(x^n)$ so that $(J_n:m_R^{\infty})\cap m_R^n=(x^n)$ and $J_n\cap m_R^n=J_n$ so $\mathcal J$ does not satisfy $A(1)$.

The above examples shows that it is not easy to approximate filtrations which satisfy $A(c)$ by filtrations which also satisfy  $A(c)$. In the case that we can, we may compute the epsilon multiplicity of a filtration as a limit of the epsilon multiplicities of the approximating filtrations. The following proposition is proven by an extension of the proof of \cite[Theorem 6.1]{C1}.

\begin{Proposition} \label{PropE4} Suppose that $R$ is an analytically unramified local ring 
and $\mathcal I$ is a filtration of $R$ which satisfies $A(c)$ for some $c\in \ZZ_{>0}$. Let $\mathcal I_i=\{I[i]_n\}$ be subfiltrations of $\mathcal I$ for $i\in \ZZ_{>0}$ such that $\mathcal I_j\subset \mathcal I_i$ if $j\ge i$, $\cup_{i=1}^{\infty}=\mathcal I$ and $\mathcal I_i$ satisfy $A(c)$ for all $i$. Then
$$
\lim_{i\rightarrow \infty}\epsilon(\mathcal I[i])=\epsilon(\mathcal I).
$$
\end{Proposition}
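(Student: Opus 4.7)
The plan is to adapt the proof of \cite[Theorem 6.1]{C1}. By Theorem \ref{ThmE1} both $\epsilon(\mathcal I)$ and each $\epsilon(\mathcal I_i)$ exist as honest limits; the task is to interchange $\lim_i$ with $\lim_n$ in $\epsilon(\mathcal I_i)=d!\lim_{n}\lambda_R((I[i]_n:m_R^\infty)/I[i]_n)/n^d$.

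The key identity comes from $A(c)$: for any filtration $\mathcal K=\{K_n\}$ satisfying $A(c)$ one has $(K_n+m_R^{cn})\cap(K_n:m_R^\infty)=K_n$, since $x=a+b$ with $a\in K_n$, $b\in m_R^{cn}$ and $x\in K_n:m_R^\infty$ forces $b=x-a\in(K_n:m_R^\infty)\cap m_R^{cn}=K_n\cap m_R^{cn}\subset K_n$. Hence the natural map $(K_n:m_R^\infty)/K_n\to R/(K_n+m_R^{cn})$ is injective with cokernel $R/(K_n^{\rm sat}+m_R^{cn})$, yielding
\begin{equation*}
\lambda_R\bigl((K_n:m_R^\infty)/K_n\bigr)=\lambda_R\bigl(R/(K_n+m_R^{cn})\bigr)-\lambda_R\bigl(R/(K_n^{\rm sat}+m_R^{cn})\bigr).
\end{equation*}
Applying this identity to both $\mathcal K=\mathcal I$ and $\mathcal K=\mathcal I_i$ and subtracting, and using the surjections $R/(I[i]_n+m_R^{cn})\twoheadrightarrow R/(I_n+m_R^{cn})$ and $R/(I[i]_n^{\rm sat}+m_R^{cn})\twoheadrightarrow R/(I_n^{\rm sat}+m_R^{cn})$ that follow from $I[i]_n\subset I_n$ and $I[i]_n^{\rm sat}\subset I_n^{\rm sat}$, one obtains
\begin{equation*}
\bigl|\lambda_R((I_n:m_R^\infty)/I_n)-\lambda_R((I[i]_n:m_R^\infty)/I[i]_n)\bigr|\le E_V(i,n)+E_W(i,n),
\end{equation*}
where $E_V(i,n):=\lambda_R\bigl((I_n+m_R^{cn})/(I[i]_n+m_R^{cn})\bigr)$ and $E_W(i,n):=\lambda_R\bigl((I_n^{\rm sat}+m_R^{cn})/(I[i]_n^{\rm sat}+m_R^{cn})\bigr)$ are both non-negative.

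It therefore suffices to show that the limits $v(i):=\lim_{n}E_V(i,n)/n^d$ and $w(i):=\lim_{n}E_W(i,n)/n^d$ exist for each $i$ and tend to zero as $i\to\infty$; this will give $|\epsilon(\mathcal I)-\epsilon(\mathcal I_i)|\le d!\bigl(v(i)+w(i)\bigr)\to 0$. Existence of these limits for each fixed $i$ is the natural extension of the proof of \cite[Theorem 6.1]{C1} from a single filtration to an approximating family, obtained by applying the same semigroup/volume machinery of that proof to the $m_R$-primary systems $\{I_n+m_R^{cn}\},\{I[i]_n+m_R^{cn}\}$ and their saturated analogues. The decisive step, and the main obstacle, is proving $v(i),w(i)\to 0$: one realizes $v(i)$ and $w(i)$ as Lebesgue measures of monotonically shrinking convex regions $\Delta_V^{(i)},\Delta_W^{(i)}\subset\RR^d$ attached via an Okounkov-body style construction to the filtrations. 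The hypothesis $\bigcup_i\mathcal I_i=\mathcal I$, together with Noetherianness of $R$ (which for each fixed $n$ gives $I[i]_n=I_n$ and $I[i]_n^{\rm sat}=I_n^{\rm sat}$ for $i\ge i_0(n)$), forces $\bigcap_i\Delta_V^{(i)}=\emptyset$ and $\bigcap_i\Delta_W^{(i)}=\emptyset$; continuity from above of Lebesgue measure then delivers $v(i),w(i)\to 0$, completing the proof.
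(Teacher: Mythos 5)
Your proof is essentially the same as the paper's (the paper carries out the argument in a companion result, Theorem~\ref{ThmE4}, stated as an extension of \cite[Theorem~6.1]{C1}): both reduce $\epsilon(\mathcal I_i)\to\epsilon(\mathcal I)$ to a statement about convergence of Okounkov-body volumes, using the $A(c)$ hypothesis to replace saturation lengths with lengths of $m_R$-primary colengths. The one cosmetic difference is the length decomposition: you use the modular-law identity $\lambda_R(K_n^{\rm sat}/K_n)=\lambda_R(R/(K_n+m_R^{cn}))-\lambda_R(R/(K_n^{\rm sat}+m_R^{cn}))$, whereas the paper instead intersects with a chain of valuation ideals $K_{\alpha cn}$ inside a birationally dominating regular local ring $S$ and writes $\lambda_R(J[i]_n/I[i]_n)=\lambda_R(J[i]_n/(K_{\alpha cn}\cap J[i]_n))-\lambda_R(I[i]_n/(K_{\alpha cn}\cap I[i]_n))$; the two are equivalent formulations of the same cancellation and lead to the same semigroup computation.

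Two of your steps are looser than the paper's treatment. First, for the volume convergence you invoke ``continuity from above'' after asserting $\bigcap_i\Delta_V^{(i)}=\emptyset$, but that assertion is exactly what needs proving: the set $\bigcap_i\Delta_V^{(i)}$ has measure zero if and only if $\bigcup_i\Delta(\Gamma_i)$ fills $\mathrm{int}(\Delta(\Gamma))$, and this does not follow from continuity of measure alone. The paper makes this precise via the semigroup inclusion $n*\Gamma(\mathcal M)_a\subset\Gamma(\mathcal M_i)_{na}$ for $i\ge\sigma(a)$ and an appeal to \cite[Theorem~3.3]{C1} (a Fujita-approximation-type statement: volumes of Okounkov bodies of truncated sub-semigroups converge to the full volume). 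Your outline correctly names the mechanism but does not supply this Fujita approximation input, which is the non-elementary part of the argument. Second, you do not mention the reduction to the complete domain case: the valuation/Okounkov machinery you invoke requires a domain, and the paper's proof explicitly passes to $\hat R$, uses Artin--Rees to control $\omega_n=R\cap m_R^nT$ where $T=\bigoplus\hat R/P_i$, and filters $M_n/(\omega_{\beta n}\cap M_n)$ by the minimal primes to reduce to the domain case. Neither issue is a wrong turn — they are details whose omission matters mainly because they are where the real technical content resides.
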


\section{Integral closure, multiplicity  and epsilon multiplicity of filtrations}\label{closure}
In this section we study the relationship between epsilon multiplicities of two filtrations and their integral closures. The following examples show that Theorem \ref{PropE6} does not extend to general filtrations.
	\begin{Example}\label{Example6}
		We have filtrations $\mathcal J\subset \mathcal I$ in a Noetherian local ring $R$ such that $\epsilon(\mathcal I_\mfp)=\epsilon(\mathcal J_\mfp)$ for all $\mfp\in\Spec R$ but $\overline{R[\mathcal I]}\neq\overline {R[\mathcal J]}$.
	\end{Example}
	In the example, $R=k[x,y]_{(x,y)}$. Further,  $\mathcal I$ and $\mathcal J$ satisfy $A(2)$.

We now construct the example. 	Consider the filtrations $\mathcal I=\{I_n=(x^n)\}$ and $\mathcal J=\{J_n=(x^{n+1},x^{n}y)\}$ in $R=\CC[x,y]_{(x,y)}$. Then $V(I_1)=V(J_1)=\{P=(x),Q=(x,y)\}$ and $\epsilon(\mathcal I_\mfp)=\epsilon(\mathcal J_\mfp)=0$ for all $\mfp\in\Spec R\setminus V(I_1)$. Note that $I_nR_P=J_nR_P=(x^n)R_P$ for all $n\in\NN$. Hence $\epsilon(\mathcal I_P)=\epsilon(\mathcal J_P)$. Since $I_n$ does not have $Q$ as an associated prime for all $n\in\NN$, we have $\epsilon(\mathcal I)=0$. For all $n\geq 1$, we have 
$$
(J_n:m_R^{\infty})/J_n=(x^n)/(x^{n+1},x^ny)\cong R/(x,y)
$$
so $\lambda_R((J_n:m_R^{\infty})/J_n)=1$. 
Thus $\epsilon(\mathcal J)=\lim\limits_{n\to\infty}2/n^2=0$.
\\Now if $\overline{R[\mathcal I]}=\overline {R[\mathcal J]}$, since $R\subset R[\mathcal J]\subset \overline {R[\mathcal J]}$ and  $\overline{R[\mathcal I]}$ is a finitely generated $R$-algebra, by the Artin-Tate lemma, we have $R[\mathcal J]$ is a finitely generated $R$-algebra which is a contradiction. Hence $\overline{R[\mathcal I]}\neq\overline {R[\mathcal J]}$.

	\begin{Example}\label{Example 0}
	We have filtrations $\mathcal J\subset \mathcal I$ in a Noetherian local ring $R$ such that $\overline{R[\mathcal I]}=\overline {R[\mathcal J]}$ but $\epsilon(\mathcal I)\neq\epsilon(\mathcal J)$.
	
	Consider the filtrations $\mathcal I=\{I_n=(x^2,xy^n)\}$ and $\mathcal J=\{J_n=(x^2,xy^{n^2})\}$ in $R=\CC[x,y]_{(x,y)}$. Since $(xy^nt^n)^2\in J_{2n}t^{2n}$, we have $\overline{R[\mathcal I]}=\overline {R[\mathcal J]}$. Note that $$\epsilon(\mathcal I)=\lim\limits_{n\to\infty}2!n/n^2=0\neq 2 =\lim\limits_{n\to\infty}2!n^2/n^2=\epsilon(\mathcal J).$$ 
\end{Example}

Let $\mfa$ be an $\mm$-primary ideal of a local ring $(R,\mm)$ and $N$ be a finitely generated $R$-module with $\dim N=r.$ Define $$e_{\mfa}(N)=\lim_{k\to\infty}\frac{l_R(N/\mfa ^kN)}{k^r/r!}.$$ If $s\geq r=\dim N,$ define (\cite[V.2]{Se}, \cite[4.7]{BH})
\[ e_s({\mfa},N)= \left\{
\begin{array}{l l}
e_{\mfa}(N) & \quad \text{if $\dim N=s$ }\\
0 & \quad \text{if $\dim N<s.$ }
\end{array} \right.\] 
Let  $(R,\mm)$ be a  local ring and $\mathcal I=\{I_n\}$ be a filtration of ideals of $R$.  In \cite[Definition 3.2]{CS}, we defined the dimension of the filtration $\mathcal I$ to be  $s(\mathcal I)=\dim R/I_n$ (for any $n\geq 1$). If $R$ is an analytically unramified local ring, $N$ is a finitely generated $R$-module, $\mathfrak a$ is an $\mm$-primary ideal  and $\mathcal I$ is a filtration on $R$ then by \cite[Proposition 4.2]{CS}, for $s\in \NN$ with $s(\mathcal I)\le s\le d$, we have 
$$e_s(\mathfrak a,\mathcal I;N):=\lim_{m\rightarrow\infty}\frac{e_s(\mathfrak a,N/I_mN)}{m^{d-s}/(d-s)!}
$$exists and 
\begin{equation}\label{two}
e_s(\mathfrak a,\mathcal I;N)=\sum_{\mathfrak p}e_{R_{\mathfrak p}}(\mathcal I_{\mathfrak p},N_{\mfp})e_{\mathfrak a}(R/\mathfrak p)
\end{equation}
where the sum is over all $\mathfrak p\in \Spec R$ such that $\dim R/\mathfrak p=s$ and $\dim R_{\mathfrak p}=d-s$. We write $e_s(\mathcal I)=e_s(\mm,\mathcal I;R)$.	
\begin{Remark}
	Note that for any filtration $\mathcal I$ in a local ring $R$ with $\dim N(\hat{R})<\dim R$, by \cite[Theorem 1.1]{C1},  we have $\epsilon (\mathcal I_\mfp)=e_{R_\mfp}(\mathcal I_\mfp)$ exists for all $\mfp\in \MinAss(R/I_n)$ and $n\geq 1$ .	
\end{Remark}
\begin{Proposition}\label{epeq}
	Let $(R,\mm)$ be an analytically unramified local ring and $\mathcal I, \mathcal J$ be filtrations in $R$ with $\mathcal J\subset \mathcal I$  and $\dim R/\mathcal J=s$. Then  $e_s(\mathcal I)=e_s(\mathcal J)$ if and only if 	$\epsilon(\mathcal I_\mfp)=\epsilon(\mathcal J_\mfp)$ for all $\mfp\in\Spec R$ with $\dim R/\mfp=s$ and $\dim R_\mfp=\dim R-s$.
\end{Proposition}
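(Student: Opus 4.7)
The plan is to apply the additivity formula (\ref{two}) to both filtrations $\mathcal I$ and $\mathcal J$, identify each localized summand with the corresponding epsilon multiplicity by invoking the remark preceding the proposition, and then exploit the monotonicity implied by $\mathcal J\subset\mathcal I$ to compare the two sums term by term.

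First I would observe that because $J_1\subset I_1$ we have $V(I_1)\subset V(J_1)$ and hence $s(\mathcal I)=\dim R/I_1\le \dim R/J_1=s$, so (\ref{two}) is applicable to both filtrations at dimension $s$. Writing $d=\dim R$ and $\mathcal P=\{\mfp\in\Spec R : \dim R/\mfp=s,\ \dim R_\mfp=d-s\}$, the formula with $N=R$ and $\mathfrak a=\mm$ yields
$$
e_s(\mathcal I)=\sum_{\mfp\in\mathcal P}e_{R_\mfp}(\mathcal I_\mfp)\,e_\mm(R/\mfp),\qquad
e_s(\mathcal J)=\sum_{\mfp\in\mathcal P}e_{R_\mfp}(\mathcal J_\mfp)\,e_\mm(R/\mfp),
$$
which puts both quantities over the same index set.

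Next, for each $\mfp\in\mathcal P$ I would verify that $\epsilon(\mathcal I_\mfp)=e_{R_\mfp}(\mathcal I_\mfp)$ and the analogous equality for $\mathcal J$. If $\mfp\not\supset I_1$ (resp.\ $\mfp\not\supset J_1$), then $I_nR_\mfp=R_\mfp$ for every $n$ and both sides vanish trivially. Otherwise $\mfp\supset I_n$ for all $n$, and combining this with $\dim R/\mfp=s\ge \dim R/I_n$ forces $\dim R/I_n=s$, so that $\mfp\in\MinAss(R/I_n)$ for every $n\ge 1$. Since $R$ is analytically unramified, $N(\hat R)=0$, so the hypothesis of the remark preceding the proposition is satisfied and yields the desired identification; the argument for $\mathcal J$ is identical.

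Finally, the containment $J_nR_\mfp\subset I_nR_\mfp$ produces a surjection $R_\mfp/J_nR_\mfp\twoheadrightarrow R_\mfp/I_nR_\mfp$, so $\lambda(R_\mfp/J_nR_\mfp)\ge\lambda(R_\mfp/I_nR_\mfp)$; passing to the limit and using the identifications above gives $\epsilon(\mathcal J_\mfp)\ge \epsilon(\mathcal I_\mfp)\ge 0$. Subtracting the two displayed formulas then produces
$$
e_s(\mathcal J)-e_s(\mathcal I)=\sum_{\mfp\in\mathcal P}\bigl(\epsilon(\mathcal J_\mfp)-\epsilon(\mathcal I_\mfp)\bigr)e_\mm(R/\mfp),
$$
a sum of nonnegative terms with strictly positive coefficients $e_\mm(R/\mfp)>0$. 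It vanishes if and only if each summand does, giving exactly the claimed equivalence. The only mildly delicate point is the second step — ensuring that the remark applies uniformly at every $\mfp\in\mathcal P$ — and it is handled by the trivial-vanishing case analysis above.
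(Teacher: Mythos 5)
Your argument is correct and follows essentially the same route as the paper's proof: apply the additivity formula~(\ref{two}) to both filtrations, identify each local multiplicity $e_{R_\mfp}$ with $\epsilon_{R_\mfp}$ (trivially when $\mfp$ misses the filtration, via the Remark when $\mfp$ is a minimal prime), and then use the monotonicity coming from $\mathcal J\subset\mathcal I$ so that the difference $e_s(\mathcal J)-e_s(\mathcal I)$ is a sum of nonnegative terms with strictly positive coefficients $e_\mm(R/\mfp)$. The only organizational difference is that the paper indexes the nontrivial contributions by $V\cap\MinAss(R/J_1)$ up front, whereas you resolve the trivial/nontrivial dichotomy prime by prime; the substance is the same.
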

\begin{proof}
Let $V=\{\mfp\in\Spec R: \dim R/\mfp=s\mbox{ and }\dim R_\mfp=\dim R-s\}$ and for all $n\geq 1$, $V\cap \MinAss(R/J_1)=V\cap \MinAss(R/J_n)=\{\mfp_1,\ldots,\mfp_r\}$. 

Let $\mfp\in V\setminus \{\mfp_1,\ldots,\mfp_r\}$. Then $J_nR_\mfp=R_\mfp$ for all $n\geq 1$. Since $\dim R/\mathcal I\leq \dim R/\mathcal J=s$, we also have $I_nR_\mfp=R_\mfp$ for all $n\geq 1$. Hence $\epsilon(\mathcal I_\mfp)=\epsilon(\mathcal J_\mfp)=e_{R_\mfp}(\mathcal I_{\mfp})=e_{R_\mfp}(\mathcal J_{\mfp})=0$.  

Let $\mfp\in  \{\mfp_1,\ldots,\mfp_r\}$. Then $\epsilon(\mathcal J_{\mfp})=e_{R_{\mathfrak p}}(\mathcal J_{\mathfrak p})$. Since $\dim R/\mathcal I\leq \dim R/\mathcal J=s$, we have  $\epsilon(\mathcal I_{\mfp})=e_{R_\mfp}(\mathcal I_{\mfp})$. 

Thus  by taking $N=R$ and $\mathfrak a=\mm$ in equation (\ref{two}), we have 
\begin{eqnarray*}
	e_s(\mathcal I)=e_s(\mathcal J)
	&\Leftrightarrow& \sum_{\mfp\in V}e_{R_{\mathfrak p}}(\mathcal I_{\mathfrak p})e_{\mm}(R/\mathfrak p)=\sum_{\mfp\in V}e_{R_{\mathfrak p}}(\mathcal J_{\mathfrak p})e_{\mm}(R/\mathfrak p)\\
	&\Leftrightarrow& \sum_{i=1}^r[e_{R_{\mathfrak p_i}}(\mathcal J_{\mathfrak p_i})-e_{R_{\mathfrak p_i}}(\mathcal I_{\mathfrak p_i})]e_{\mm}(R/\mathfrak p_i)=0\\
	&\Leftrightarrow&e_{R_{\mathfrak p_i}}(\mathcal I_{\mathfrak p_i})=e_{R_{\mathfrak p_i}}(\mathcal J_{\mathfrak p_i})\mbox { for all }i=1,
	\ldots,r.\\
	&\Leftrightarrow&\epsilon(\mathcal I_{\mfp_i})=\epsilon(\mathcal J_{\mfp_i})\mbox { for all }i=1,
	\ldots,r.
\end{eqnarray*}
\end{proof}	

\begin{Remark}\label{nceq}
Let  $(R,\mm)$ be a Noetherian local ring and $\mathcal I,\mathcal J$ be filtrations of $R$ such that $\mathcal J\subset \mathcal I$. Suppose $R[\mathcal J]=\overline{R[\mathcal J]}=\overline {R[\mathcal I]}$. Then $\mathcal I=\mathcal J$.
\end{Remark}

We recall a few definitions from \cite{CS}.	

An $s$-divisorial filtration is a divisorial filtration $\mathcal J=\{J_m=I(\nu_1)_{\lceil ma_1\rceil}\cap\cdots\cap I(\nu_r)_{\lceil ma_r\rceil}\}$ with $\dim R/m_{\nu_i}\cap R=s$ for all $i=1,\ldots,r$.  

A filtration $\mathcal J$ is called a bounded filtration if there exists a divisorial filtration
$$
\mathcal C=\{C_m=I(\nu_1)_{\lceil ma_1\rceil}\cap\cdots\cap I(\nu_r)_{\lceil ma_r\rceil}\}
$$
such that $\overline{R[\mathcal J]}=R[\mathcal C]$.

A filtration $\mathcal I$ is called a bounded $s$-filtration if there exists an $s$-divisorial filtration 
$\mathcal C$ such that $\overline{R[\mathcal J]}=R[\mathcal C]$.

We now prove Theorem \ref{equality} from the introduction, which we restate here for the convenience of the reader. 

\begin{Theorem}
	Let $(R,\mm)$ be an excellent local domain and $\mathcal I$ be a filtration of ideals in $R$. Then the following hold.
	\begin{enumerate}
		\item[{(1)}] Suppose $\mathcal J$ is an $s$-divisorial filtration such 
		that $\mathcal J\subset \mathcal I$. Then the following are equivalent.
		\begin{enumerate}
			\item[{(i)}] $\epsilon(\mathcal I_\mfp)=\epsilon(\mathcal J_\mfp)$ for all $\mfp\in\Spec R$. 
			\item[{(ii)}] $\epsilon(\mathcal I_\mfp)=\epsilon(\mathcal J_\mfp)$ for all $\mfp\in\Spec R$ such that $\ell(\mathcal J_\mfp)=\dim R_\mfp$. 
			\item[{(iii)}] $\epsilon(\mathcal I_\mfp)=\epsilon(\mathcal J_\mfp)$ for all $\mfp\in\Spec R$ with $\dim R/\mfp=s$.
			\item[{(iv)}] $\mathcal I=\mathcal J$.
			\item[{(v)}] $\overline{R[\mathcal I]}=\overline {R[\mathcal J]}$.
		\end{enumerate}
		\item[{(2)}] Suppose $\mathcal J$
		is a bounded $s$-filtration such 
		that $\mathcal J\subset \mathcal I$. Then $\overline{R[\mathcal I]}=\overline {R[\mathcal J]}$  if and only if $\epsilon(\mathcal I_\mfp)=\epsilon(\mathcal J_\mfp)$ for all $\mfp\in\Spec R$ with $\dim R/\mfp=s$.
	\end{enumerate}
\end{Theorem}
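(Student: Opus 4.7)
The plan is to establish Part (1) by the cyclic chain (iv)\,$\Rightarrow$\,(i)\,$\Rightarrow$\,(ii)\,$\Rightarrow$\,(iii)\,$\Rightarrow$\,(v)\,$\Rightarrow$\,(iv), and then to deduce Part (2) by reducing to Part (1) via the $s$-divisorial filtration $\mathcal C$ attached to the bounded $s$-filtration $\mathcal J$.

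The implications (iv)\,$\Rightarrow$\,(i) and (i)\,$\Rightarrow$\,(ii) are formal. For (ii)\,$\Rightarrow$\,(iii), I fix $\mfp$ with $\dim R/\mfp=s$. If $\mfp\not\supset J_1$, then $\mathcal J_\mfp$ is the trivial filtration, which forces $\mathcal I_\mfp$ trivial as well (since $\mathcal J\subset \mathcal I$), so both epsilon multiplicities vanish. Otherwise $\mfp$ is a minimal prime of $J_1$; since $\mathcal J$ is $s$-divisorial with centers $P_i=\mathfrak m_{\nu_i}\cap R$ of dimension $s$, catenarity of the excellent domain $R$ forces $\mfp=P_i$ for some $i$. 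The localized filtration $\mathcal J_\mfp$ then has all valuations centered at $\mfp R_\mfp$, so $J_1R_\mfp$ is $\mfp R_\mfp$-primary, and a fiber cone argument (the fiber cone of $\mathcal J_\mfp$ contains the fiber cone of $J_1R_\mfp$, which has dimension $\dim R_\mfp$ by the classical analytic spread result for primary ideals) yields $\ell(\mathcal J_\mfp)=\dim R_\mfp$, placing $\mfp$ in the range of (ii).

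The heart of the argument is (iii)\,$\Rightarrow$\,(v). Proposition~\ref{epeq} (applicable since excellent implies analytically unramified and $s(\mathcal J)=s$) translates (iii) into the equality $e_s(\mathcal I)=e_s(\mathcal J)$ of the $s$-dimensional multiplicities. I then invoke the Rees-type theorem for $s$-divisorial filtrations from \cite{CS}: when $\mathcal J$ is $s$-divisorial with $\mathcal J\subset\mathcal I$, equality $e_s(\mathcal I)=e_s(\mathcal J)$ forces $\overline{R[\mathcal I]}=\overline{R[\mathcal J]}$. This is the chief obstacle; everything else in the proof is either formal or a direct verification. Finally (v)\,$\Rightarrow$\,(iv) is immediate from Lemma~\ref{integrallyclosed}: since $\mathcal J$ is discrete valued, $\overline{R[\mathcal J]}=R[\mathcal J]$, and (v) collapses the chain $R[\mathcal J]\subseteq R[\mathcal I]\subseteq \overline{R[\mathcal I]}=R[\mathcal J]$ to $\mathcal I=\mathcal J$.

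For Part (2), introduce the $s$-divisorial filtration $\mathcal C$ with $R[\mathcal C]=\overline{R[\mathcal J]}$ (it exists by the definition of bounded $s$-filtration), and the integral closure $\mathcal I^*$ of $\mathcal I$, characterised by $R[\mathcal I^*]=\overline{R[\mathcal I]}$. Monotonicity of integral closure gives $\mathcal C\subseteq \mathcal I^*$, and $\mathcal C$ is $s$-divisorial. At any $\mfp$ with $\dim R/\mfp=s$ lying over a center of $\mathcal C$, all four localized filtrations are $\mfp R_\mfp$-primary; Theorem~\ref{ep} supplies an $A(c_\mfp)$ property for $\mathcal C_\mfp$, and Proposition~\ref{finite length}(ii) applied to the local integral extensions $R_\mfp[\mathcal J_\mfp]\subseteq R_\mfp[\mathcal C_\mfp]$ and $R_\mfp[\mathcal I_\mfp]\subseteq R_\mfp[\mathcal I^*_\mfp]$ gives $\epsilon(\mathcal J_\mfp)=\epsilon(\mathcal C_\mfp)$ and $\epsilon(\mathcal I_\mfp)=\epsilon(\mathcal I^*_\mfp)$. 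The forward direction now follows, since $\overline{R[\mathcal I]}=\overline{R[\mathcal J]}$ gives $\mathcal I^*=\mathcal C$. For the reverse direction, the hypothesis combined with these two local equalities yields $\epsilon(\mathcal I^*_\mfp)=\epsilon(\mathcal C_\mfp)$ at every $\mfp$ with $\dim R/\mfp=s$; applying Part (1), (iii)\,$\Rightarrow$\,(iv) to the $s$-divisorial inclusion $\mathcal C\subseteq \mathcal I^*$ gives $\mathcal I^*=\mathcal C$, i.e., $\overline{R[\mathcal I]}=\overline{R[\mathcal J]}$.
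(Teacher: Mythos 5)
Your Part (1) follows essentially the same route as the paper. You chain (iv)$\Rightarrow$(i)$\Rightarrow$(ii)$\Rightarrow$(iii) formally, prove (ii)$\Rightarrow$(iii) by observing that any $\mfp$ with $\dim R/\mfp=s$ containing $J_1$ must be one of the centers $P_i$ of $\mathcal J$ (by catenarity of the excellent domain), and your (iii)$\Rightarrow$(v)$\Rightarrow$(iv) is equivalent to the paper's (iii)$\Rightarrow$(iv) (both pass through Proposition~\ref{epeq} to get $e_s(\mathcal I)=e_s(\mathcal J)$ and then invoke the uniqueness theorem \cite[Theorem 6.7]{CS}; since $\mathcal J$ is discrete valued and hence $R[\mathcal J]=\overline{R[\mathcal J]}$, the conclusions $\mathcal I=\mathcal J$ and $\overline{R[\mathcal I]}=\overline{R[\mathcal J]}$ are interchangeable in the presence of $\mathcal J\subset\mathcal I$). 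You omit the $s=0$ case, which the paper dispatches first, but this is a minor matter.

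Your Part (2), however, takes a genuinely different route from the paper and it has a real gap. You reduce to Part (1) by passing to the attached $s$-divisorial filtration $\mathcal C$ and the integral closure $\mathcal I^*$ of $\mathcal I$, and at each prime $\mfp$ with $\dim R/\mfp=s$ you apply Proposition~\ref{finite length}(ii) to the inclusions $\mathcal J_\mfp\subset\mathcal C_\mfp$ and $\mathcal I_\mfp\subset\mathcal I^*_\mfp$ to conclude $\epsilon(\mathcal J_\mfp)=\epsilon(\mathcal C_\mfp)$ and $\epsilon(\mathcal I_\mfp)=\epsilon(\mathcal I^*_\mfp)$. The problem is that Proposition~\ref{finite length}(ii) requires the \emph{smaller} of the two filtrations to satisfy $A(c)$ for some $c$ (this is how the finite limits are obtained), but Theorem~\ref{ep} only gives $A(c_\mfp)$ for $\mathcal C_\mfp$, the \emph{larger} filtration; nothing in your argument establishes $A(c)$ for $\mathcal J_\mfp$ or $\mathcal I_\mfp$, and in the primary situation $A(c)$ amounts to $\mfp^{cn}R_\mfp\subset J_nR_\mfp$, which does not follow from $\mfp^{cn}R_\mfp\subset C_nR_\mfp$. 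Moreover, Proposition~\ref{finite length}(ii) also assumes the local ring is analytically \emph{irreducible}, which for $R_\mfp$ is not guaranteed by $R$ being an excellent local domain (excellent gives analytically unramified, not analytically irreducible). The paper avoids both difficulties entirely: for a bounded $s$-filtration one already has (from \cite[Theorem 5.1]{CS}) that $\overline{R[\mathcal I]}=\overline{R[\mathcal J]}$ forces $e_s(\mathcal I)=e_s(\mathcal J)$, so Proposition~\ref{epeq} gives the equality of local epsilon multiplicities directly; conversely, equality of local epsilon multiplicities gives $e_s(\mathcal I)=e_s(\mathcal J)$ by Proposition~\ref{epeq}, and then \cite[Theorem 7.4]{CS} gives equality of integral closures. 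You should replace your localization argument with these two appeals to \cite{CS} via Proposition~\ref{epeq}, which is what the condition of being a bounded $s$-filtration is designed for.
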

\begin{proof}
	Note that $\dim R/\mathcal I\leq \dim R/\mathcal J=s$. Suppose that $s=0$. Then $e_R(\mathcal I)=\epsilon(\mathcal I), e_R(\mathcal J)=\epsilon(\mathcal J)$. 	Thus $(1)$ and $(2)$ follow from \cite[Theorem 1.4]{C} , \cite[Theorem 13.1]{C} and Remark \ref{nceq}. Therefore we assume $s>0$.
	
$(1)$	It is clear that $(i)\Rightarrow (ii)$, $(iv)\Rightarrow (i)$ and $(iv)\Rightarrow (v)$. By Lemma \ref{integrallyclosed} and Remark \ref{nceq}, we have $(v)\Rightarrow (iv)$.

Next we show that $(ii)$ implies $(iii)$. For any $\mfp\in\Spec R$
with $\dim R/\mfp=s$, we have either $\mfp\notin V(J_1)$ or $\mathcal J_\mfp$ is a filtration of $\mfp R_\mfp$-primary ideals. If $\mfp\notin V(J_1)$ then $\mfp\notin V(I_1)$. Hence $\epsilon(\mathcal I_\mfp)=\epsilon(\mathcal J_\mfp)=0$. If $\mathcal J_\mfp$ is a filtration of $\mfp R_\mfp$-primary ideals then $\ell(\mathcal J_\mfp)=\dim R_\mfp$ and by hypothesis $\epsilon(\mathcal I_\mfp)=\epsilon(\mathcal J_\mfp)$.

Now we prove that $(iii)$ implies $(iv)$. By Proposition \ref{epeq}, we have $e_s(\mathcal I)=e_s(\mathcal J)$. Therefore by \cite[Theorem 6.7]{CS}, we get $\mathcal I=\mathcal J$.

	$(2)$ Suppose $\overline{R[\mathcal I]}=\overline {R[\mathcal J]}$. Then by \cite[Theorem 5.1]{CS}, we have $e_s(\mathcal I)=e_s(\mathcal J)$ and hence by Proposition \ref{epeq}, we have $\epsilon(\mathcal I_\mfp)=\epsilon(\mathcal J_\mfp)$ for all $\mfp\in\Spec R$ with $\dim R/\mfp=s$.

Suppose $\epsilon(\mathcal I_\mfp)=\epsilon(\mathcal J_\mfp)$ for all $\mfp\in\Spec R$ with $\dim R/\mfp=s$.
Then by Proposition \ref{epeq}, we have $e_s(\mathcal I)=e_s(\mathcal J)$. Using \cite[Theorem 7.4]{CS}, we get the required result.
	
\end{proof}	

\begin{Corollary}
		Let $(R,\mm)$ be an excellent local domain and $\mathcal I$ be a filtration of ideals in $R$. Let $\mathcal J=\{J^n\}$ where $J$ is an equimultiple ideal such that $J^n\subset I_n$ for all $n\geq 1$. 
Then the following are equivalent.
		\begin{enumerate}
			\item[{(i)}] $\overline{R[\mathcal I]}=\overline {R[\mathcal J]}$.
			\item[{(ii)}]  $\epsilon(\mathcal I_\mfp)=\epsilon(\mathcal J_\mfp)$ for all $\mfp\in\Spec R$ with $\dim R/\mfp=\dim R-\ell(J)$.
			\item[{(iii)}]$\epsilon(\mathcal I_\mfp)=\epsilon(\mathcal J_\mfp)$ for all $\mfp\in\Spec R$ such that $\ell(\mathcal J_\mfp)=\dim R_\mfp$. 
		\end{enumerate}
	\begin{proof}
		If $\height J=\dim R$ then the result follows from \cite[Theorem 1.2]{C}. Suppose $\height J<\dim R$.
		Since $J$ is an equimultiple ideal, by \cite[Corollary 8.3]{CS},  $\mathcal J$ is a bounded $s$-filtration where $s=\dim R-\ell(J)$. Thus the equivalence of $(i)$ and $(ii)$ follows from Theorem \ref{equality} (2). 
\\ Since  $J$ is an equimultiple ideal, by \cite[Corollary 9, 9.3]{Ratliff}, all prime divisors of $\overline{J^n}$ are of height $\ell(J)$. Therefore for all $n\geq 1$ and any $\mfp\in\Ass(R/\overline{J^n})=\MinAss(R/J),$ we have $\dim R/\mfp=s$. Now using \cite[Corollary 4]{Ratliff}, $\ell (\mathcal J_\mfp)=\dim R_\mfp$ if and only if $\mfp\in\Ass(R/\overline{J^n})=\MinAss(R/J)$ and $\dim R/\mfp=s$. Thus $(ii)$ implies $(iii)$.
\\Now suppose $\mfp\in\Spec R$ such that $\dim R/\mfp=\dim R-\ell(J)=\dim R-\height J$.
If $\mfp\notin V(J)$ then $J_\mfp=R_\mfp=I_nR_\mfp$ and hence $\epsilon(\mathcal I_\mfp)=\epsilon(\mathcal J_\mfp)=0$. Suppose $\mfp\in V(J)$. Then $\mfp=\in\Ass(R/\overline{J^n})=\MinAss(R/J)$ and hence $\ell(\mathcal J_\mfp)=\dim R_\mfp$. Thus $(iii)$ implies $(ii)$.
		\end{proof}

\end{Corollary}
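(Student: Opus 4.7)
The plan is to split the argument according to whether $J$ is $\mm$-primary or not, reducing to Theorem \ref{equality}(2) in the non-primary case, and then use Ratliff's characterizations of the prime divisors of an equimultiple ideal to identify the two sets of primes in (ii) and (iii). If $\height J = \dim R$, then $J$ is $\mm$-primary, $\mathcal J$ is a filtration of $\mm$-primary ideals, and the result follows by invoking \cite[Theorem 1.2]{C} (as was used in the proof of Theorem \ref{equality} for the case $s=0$); so I would dispose of that case at the outset.

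For the main case $\height J < \dim R$, the first step is to observe that because $J$ is equimultiple, \cite[Corollary 8.3]{CS} tells us that the power filtration $\mathcal J = \{J^n\}$ is a bounded $s$-filtration with $s = \dim R - \ell(J)$. Since $\mathcal J \subset \mathcal I$ and $\dim R/\mathcal I \le \dim R/\mathcal J = s$, Theorem \ref{equality}(2) applies and immediately yields (i) $\Leftrightarrow$ (ii).

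It remains to show (ii) $\Leftrightarrow$ (iii), and my plan is to identify precisely the two sets of primes appearing. Ratliff's equimultiplicity results \cite[Corollary 9, 9.3]{Ratliff} give that $\Ass(R/\overline{J^n})$ is independent of $n$, equals $\MinAss(R/J)$, and consists entirely of primes of height $\ell(J)$; combined with \cite[Corollary 4]{Ratliff}, this characterizes $\ell(\mathcal J_\mfp) = \dim R_\mfp$ as equivalent to $\mfp \in \MinAss(R/J)$. Since $R$ is an excellent local domain, it is universally catenary, so $\dim R/\mfp + \height \mfp = \dim R$ for every prime; hence every $\mfp \in \MinAss(R/J)$ satisfies $\dim R/\mfp = s$. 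Conversely, if $\dim R/\mfp = s$ and $\mfp \supset J$, then $\height \mfp = \ell(J) = \height J$ and one of the minimal primes of $J$ contained in $\mfp$ (which already has height $\ell(J)$) must equal $\mfp$, so $\mfp \in \MinAss(R/J)$. For primes $\mfp$ with $\dim R/\mfp = s$ but $\mfp \not\supset J$, localizing gives $J^nR_\mfp = R_\mfp$ and hence $I_nR_\mfp = R_\mfp$, making both $\epsilon(\mathcal I_\mfp)$ and $\epsilon(\mathcal J_\mfp)$ vanish automatically. These observations give a bijection-up-to-trivial-primes between the indexing sets in (ii) and (iii), closing the equivalence.

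The main obstacle I expect is the bookkeeping in the prime-dictionary step: one must be careful that the catenarity hypothesis on $R$ is enough to convert Ratliff's height-based statements into the codimension-based formulation $\dim R/\mfp = s$, and one must verify that the "irrelevant" primes (those of the right dimension but not contained in $V(J)$) truly give zero epsilon multiplicity on both sides. Once these are in place, no further computation is required; everything reduces to a clean application of Theorem \ref{equality}(2) combined with Ratliff's equimultiple theory.
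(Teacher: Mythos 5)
Your proposal is correct and follows essentially the same route as the paper: the case split on $\height J$, the reduction to Theorem \ref{equality}(2) via \cite[Corollary 8.3]{CS}, and the use of Ratliff's results with catenarity to identify the prime sets in (ii) and (iii). You spell out the catenarity bookkeeping a bit more explicitly than the paper does, but the argument is the same.
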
	

\begin{Example}	
	Theorem \ref{equality} does not extend to be true for general divisorial filtrations. Let $\mathcal J=\{\overline{\mfp^n}\}$ and $\mathcal I=\{\mfp^{(n)}\}$ where $\mfp$ is a height 2 prime ideal in a regular local ring $R$ of dimension $3$ such that $\mathcal I$ is not finitely generated  $R$-algebra. Hence $\overline{R[\mathcal I]}\neq\overline{R[\mathcal J]}$.
	We have $\dim R/\mathcal J=1$. Note that 
	$\epsilon(\mathcal I_{\mfp})=e_1(\mathcal I)=e_1(\mathcal J)=\epsilon(\mathcal J_{\mfp})$ and $0=\epsilon(\mathcal I)\neq \epsilon(\mathcal J)=\epsilon(\mfp)>0$  by Proposition 2.1 (b) \cite{JMV}	, which shows that $\epsilon(I)=\epsilon(\{\overline{I^n}\})$ and Theorem \ref{TheoremI2} as $\ell(\mfp)=\dim R$.  Statement (iii)  of Theorem \ref{equality} is true but statements (i), (ii) (iv) and (v) are not.
\end{Example}

\begin{Example}
	Theorem \ref{equality} does not extend for bounded filtrations. Let $\mathcal J=\{\mfp^n\}$ and $\mathcal I=\{\mfp^{(n)}\}$ where $\mfp$ is a height 2 prime ideal in a regular local ring $R$ of dimension $3$ such that $\mathcal I$ is not finitely generated $R$-algebra. Hence $\overline{R[\mathcal I]}\neq\overline{R[\mathcal J]}$.
	We have $\dim R/\mathcal J=1$. Note that 
	$\epsilon(\mathcal I_{\mfp})=e_1(\mathcal I)=e_1(\mathcal J)=\epsilon(\mathcal J_{\mfp})$ and $0=\epsilon(\mathcal I)\neq \epsilon(\mathcal J)=\epsilon(\mfp)>0$ as $\ell(\mathcal J)=\dim R$.
\end{Example}

\end{document}